%
\documentclass[11pt,a4paper]{amsart}


\usepackage{mathtools}
\usepackage{graphicx}
\usepackage{amsthm}
\usepackage{amssymb}
\usepackage{caption}
\usepackage{hyperref}
\usepackage{nicematrix}
\usepackage{enumitem}
\usepackage{booktabs}
\usepackage[dvipsnames]{xcolor}
\usepackage[left=30mm, right=30mm]{geometry}



\usepackage{tikz}
\usepackage{tikz-cd}
\usetikzlibrary{calc}
\usetikzlibrary{decorations.pathmorphing}
\usetikzlibrary{decorations.pathreplacing}

\theoremstyle{definition}
\newtheorem{definition}{Definition}[section]

\newtheorem{example}[definition]{Example}

\theoremstyle{plain}
\newtheorem{corollary}[definition]{Corollary}
\newtheorem{lemma}[definition]{Lemma}
\newtheorem{proposition}[definition]{Proposition}
\newtheorem{theorem}[definition]{Theorem}

\numberwithin{equation}{section}

\tikzset{
	vertex/.style={
		circle,
		minimum size=1.8mm,
		fill,
		inner sep=0,
		outer sep=0,
	},
	edge/.style={
		line width=.25mm,
	}
}


\newcommand*\defterm{\emph}

\newcommand*\diam[1]{\mathrm{diam}(#1)}
\newcommand*\dist[3]{d_{#1}(#2,#3)}
\newcommand*\girth[1]{\mathrm{girth}(#1)}
\newcommand*\knitdegree[1]{\mathrm{kd}(#1)}
\newcommand*\starknitdegree[1]{\mathrm{kd}^*(#1)}
\newcommand*\cliquenumber[2][]{\omega\parens[#1]{#2}}
\newcommand*\chromaticnumber[2][]{\chi\parens[#1]{#2}}


\newcommand\bigboxtimesaux[2]{%
	\mbox{%
		\begin{tikzpicture}[
			line join=round,
			line width=.5pt, 
			baseline=#2,
			]
			\pgfmathsetlengthmacro{\t}{#1}
			\draw (-\t,-\t) -- (\t,\t);
			\draw (-\t,\t) -- (\t,-\t);
			\draw (-\t,-\t) rectangle (\t,\t);
		\end{tikzpicture}%
	}
}

\newcommand\bigboxtimes{%
	\mathop{
		\mathchoice 
		{\bigboxtimesaux{.6em}{-.25em}}
		{\bigboxtimesaux{.45em}{-.25em}\,}
		{\boxtimes}
		{\boxtimes}
	}%
}

\newcommand\bignabla{%
	\mathop{
		\mathchoice 
		{\raisebox{-2pt}{\text{\LARGE$\nabla$}}}
		{\raisebox{-1.5pt}{\text{\Large$\nabla$}}}
		{\nabla}
		{\nabla}
	}%
}

\newcommand*\graphjoin{\bignabla}
\newcommand*\graphjointwo{\mathbin{\nabla}}
\newcommand*\strongprod{\bigboxtimes}

\newcommand*\commgraph[1]{\mathcal{G}(#1)}
\newcommand*\extendedcommgraph[1]{\mathcal{G}^*(#1)}

\newcommand*\centre[1]{Z(#1)}

\DeclarePairedDelimiter{\abs}{\lvert}{\rvert}

\DeclarePairedDelimiter{\parens}{\lparen}{\rparen}
\DeclarePairedDelimiter{\bracks}{\lbrack}{\rbrack}
\DeclarePairedDelimiter{\braces}{\{}{\}}

\DeclarePairedDelimiter{\set}{\{}{\}}
\DeclarePairedDelimiterX{\gset}[2]{\{}{\}}{\,#1:#2\,}

\newcommand*\Xn{\set{1,\ldots,n}}
\newcommand*\X[1]{\set{1,\ldots,#1}}
\newcommand*\setX{\Xn}

\newcommand*\NC{\mathit{NC}}


\newcommand*{\Tr}[1]{\mathcal{T}_{#1}}

\allowdisplaybreaks

\begin{document}

\title{Commuting graphs and semigroup constructions}

\author{Tânia Paulista}
\address[T. Paulista]{%
Center for Mathematics and Applications (NOVA Math) \& Department of Mathematics\\
NOVA School of Science and Technology\\
NOVA University of Lisbon\\
2829--516 Caparica\\
Portugal
}
\email{%
tpl.paulista@gmail.com
}
\thanks{This work is funded by national funds through the FCT -- Fundação para a Ciência e a Tecnologia, I.P., under the scope of the projects UID/297/2025 and UID/PRR/297/2025 (Center for Mathematics and Applications - NOVA Math). The author is also funded by national funds through the FCT -- Fundação para a Ciência e a Tecnologia, I.P., under the scope of the studentship 2021.07002.BD}

\thanks{The author is thankful to her supervisors António Malheiro and Alan J. Cain for all the support, encouragement and guidance; and also for reading a draft of this paper}

\subjclass[2020]{Primary 05C25; Secondary  05C12, 05C15, 05C38, 05C40, 05C69, 20M99}

\begin{abstract}
	The aim of this paper is to see how commuting graphs interact with two semigroup constructions: the zero-union and the direct product. For both semigroup constructions, we investigate the diameter, clique number, girth, chromatic number and knit degree of their commuting graphs and, when possible, we exhibit the relationship between each one of these properties and the corresponding properties of the commuting graphs of the original semigroups.
\end{abstract}

\maketitle

\section{Introduction}

The commuting graph of a semigroup is a simple graph, contructed from a semigroup, that describes commutativity of elements. Commuting graphs were introduced in 1955 by Brauer and Fowler \cite{First_paper_commuting_graphs} and, since then, they have been widely studied. The close relationship between the algebraic structure of a semigroup and the combinatorial structure of its commuting graph contributes to the attention these graphs continue to receive. Moreover, this relationship makes these graphs useful tools to approach group/semigroup theoretical questions. For example, they played an important role in the discovery of three sporadic simple groups (now known as the Fischer groups) \cite{Sporadic_simple_groups}. Commuting graphs were also involved in the determination of an upper bound for the size of the abelian subgroups of a finite group \cite{Importance_commuting_graphs_1}. In addition, commuting graphs had an important role in proving various results concerning finite dimensional division algebras \cite{Importance_commuting_graphs_2, Importance_commuting_graphs_3, Importance_commuting_graphs_5, Importance_commuting_graphs_6, Importance_commuting_graphs_4}. Furthermore, they were used to answer (positively, except in one case) a conjecture formulated by Schein (see \cite{Schein_conjecture}) in the context of characterizing $r$-semisimple bands \cite{Commuting_graph_T_X}.

Commuting graphs have been studied from different perspectives. Several authors investigated the commuting graphs of important groups and semigroups, such as the symmetric group \cite{Commuting_graph_I_X, Symmetric_group, Diameter_commuting_graph_symmetric_group, Commuting_graph_symmetric_alternating_groups}, the alternating group \cite{Commuting_graph_symmetric_alternating_groups, Alternating_group}, the transformation semigroup \cite{Commuting_graph_T_X, Largest_commutative_T_X_P_X}, the symmetric inverse semigroup \cite{Commuting_graph_I_X} and the partial transformation semigroup \cite{Largest_commutative_T_X_P_X, Diameter_P_X}. Other authors focused on identifying which simple graphs are isomorphic to commuting graphs of groups/semigroups \cite{Graphs_arise_as_commuting_graphs_groups, Graphs_that_arise_as_commuting_graphs_of_semigroups, Graphs_that_arise_as_commuting_graphs_of_semigroups_2}. Another way to study commuting graphs is through the characterization of the groups/semigroups whose commuting graph has a certain property (such as being a cograph, a chordal graph, a perfect graph or a split graph) \cite{Graphs_arise_as_commuting_graphs_groups, Commuting_graphs_perfect, Commuting_graphs_groups_split}. Additionally, there are several papers \cite{Commuting_graph_T_X, Graphs_that_arise_as_commuting_graphs_of_semigroups, Graphs_that_arise_as_commuting_graphs_of_semigroups_2, Group_whose_commuting_graph_has_diameter_n, Completely_0-simple_paper, Completely_simple_semigroups_paper, Commuting_graphs_inverse_completely_regular} that address the following question: given a property of commuting graphs (such as the diameter, clique number, girth, chromatic number, knit degree) a class of semigroups $\mathcal{C}$ (for example, the class of groups, semigroups, completely simple semigroups, completely $0$-simple semigroups, inverse semigroups, completely regular semigroups) and $n\in\mathbb{N}$, is it possible to find a semigroup in the class $\mathcal{C}$ such that the chosen property of the commuting graph of that semigroup is equal to $n$?

In this paper we investigate commuting graphs from a different perspective: we aim to understand how commuting graphs interact with semigroup constructions. We conduct this study through the comparison of several properties of the commuting graph of a particular semigroup construction with the corresponding properties of the commuting graphs of the initial semigroups. This line of reasoning is motivated by the existence of several properties that are preserved by considering semigroup constructions: the preservation (or non-preservation) of properties of semigroups under various constructions has long been a subject of study \cite{Prop_smg_construction_3, Prop_smg_construction_4, Prop_smg_construction_2, Prop_smg_construction_1}. Thus it is natural to consider the analogous questions for commuting graphs of semigroup constructions. There has already been some work related to this topic: recently, the present author investigated the commuting graphs of Rees matrix semigroups over groups \cite{Completely_simple_semigroups_paper} and of $0$-Rees matrix semigroups over groups \cite{Completely_0-simple_paper}. In this paper we contribute to this topic by considering the commuting graphs of two other semigroup constructions: the zero-union of semigroups and the direct product of semigroups. The latter is a well-known semigroup construction that requires no introduction. The former yields the semigroup given by the disjoint union of all the original semigroups with a new element $0$, which inherits multiplication within the original semigroups and with the remaining products equal to $0$. Due to its simplicity, zero-unions of semigroups are frequently used in semigroup theory as a tool to construct examples/counterexamples. They are also useful to establish new results: in \cite{Zero_union_importance} they were used to prove a theorem regarding the concept of index for semigroups, and in the upcoming paper \cite{Commuting_graphs_inverse_completely_regular} they will be important in establishing, for each odd integer $n$ greater than $3$, the existence of a Clifford semigroup whose commuting graph has clique and chromatic numbers both equal to $n$.

The structure of the paper is as follows. In Section~\ref{Preliminaries} we gather several basic notions from graph theory that we will use in the paper. Moreover, we introduce the notions of commuting graphs and extended commuting graphs of semigroups. In Sections~\ref{cha: G(zero-union)}~and~\ref{cha: G(direct product)} we investigate the commuting graph of a zero-union of semigroups and of a direct product of semigroups, respectively. We are interested in studying the knit degree of these semigroup constructions, as well as the diameter, clique number, girth and chromatic number of their commuting graphs. Furthermore, we will see that several of these properties can be obtained from the corresponding properties of the commuting graphs of the original semigroups. In these cases, we exhibit the relationship between the properties of the relevant commuting graphs.


This paper is based on Chapters 6 and 7 of the author's Ph.D. thesis \cite{My_thesis}.

\section{Preliminaries} \label{Preliminaries}

\subsection{Simple graphs}\label{sec: graphs}

A \defterm{simple graph} $G=(V,E)$ consists of a non-empty set $V$ --- whose elements are called \defterm{vertices} --- and a set $E$ --- whose elements are called \defterm{edges} --- formed by $2$-subsets of $V$. Throughout this subsection we will assume that $G=(V,E)$ is a simple graph.

Let $x$ and $y$ be vertices of $G$. If $\set{x,y}\in E$, then we say that the vertices $x$ and $y$ are \defterm{adjacent}. If $\set{x,z}\notin E$ for all $z\in V$ (that is, if $x$ is not adjacent to any other vertex), then we say that $x$ is an \defterm{isolated vertex}.


If $H=\parens{V',E'}$ is also a simple graph, then we say that $G$ and $H$ are \defterm{isomorphic} if there exists a bijection $\varphi: V\to V'$ such that for all $x,y\in V$ we have $\set{x,y}\in E$ if and only if $\set{x\varphi,y\varphi}\in E'$ (that is, for all $x,y\in V$ we have that $x$ and $y$ are adjacent in $G$ if and only if $x\varphi$ and $y\varphi$ are adjacent in $H$).


A simple graph $H=\parens{V',E'}$ is a \defterm{subgraph} of $G$ if $V'\subseteq V$ and $E'\subseteq E$. Note that, since $H$ is a simple graph, the elements of $E'$ are $2$-subsets of $V'$.

Given $V'\subseteq V$, the \defterm{subgraph induced by $V'$} is the subgraph of $G$ whose set of vertices is $V'$ and where two vertices are adjacent if and only if they are adjacent in $G$ (that is, the set of edges of the induced subgraph is $\braces{\braces{x,y}\in E: x,y\in V'}$).

A \defterm{complete graph} is a simple graph where all distinct vertices are adjacent to each other. The unique (up to isomorphism) complete graph with $n$ vertices is denoted $K_n$.

A \defterm{null graph} is a simple graph with no edges and where all vertices are isolated vertices.


A \defterm{path} in $G$ from a vertex $x$ to a vertex $y$ is a sequence of pairwise distinct vertices (except, possibly, $x$ and $y$) $x=x_1,x_2,\ldots,x_n=y$ such that $\braces{x_1,x_2}, \braces{x_2,x_3},\ldots, \braces{x_{n-1},x_n}$ are pairwise distinct edges of $G$. The \defterm{length} of the path is the number of edges of the path; thus, the length of our example path is $n-1$. If $x=y$ then we call the path a \defterm{cycle}. Whenever we want to mention a path, we will write that $x=x_1-x_2-\cdots-x_n=y$ is a path (instead of writing that $x=x_1,x_2,\ldots,x_n=y$ is a path).

If $x$ and $y$ are vertices of $G$, then we are going to use the notation $x\sim y$ to mean that either $x=y$ or $\set{x,y}\in E$. Note that if $x_1-x_2-\cdots-x_n$ is a path, then we have $x_1\sim x_2 \sim\cdots\sim x_n$. However, if we have $x_1\sim x_2 \sim\cdots\sim x_n$, then that sequence of vertices does not necessarily form a path because there might exist distinct $i,j\in\Xn$ such that $x_i=x_j$.

We say that $G$ is \defterm{connected} if for all vertices $x,y\in V$ there is a path from $x$ to $y$.

The \defterm{distance} between two vertices $x$ and $y$, denoted $\dist{G}{x}{y}$, is the length of a shortest path from $x$ to $y$. If there is no such path between the vertices $x$ and $y$, then the distance between $x$ and $y$ is defined to be infinity, that is, $\dist{G}{x}{y}=\infty$. The \defterm{diameter} of $G$, denoted $\diam{G}$, is the maximum distance between vertices of $G$, that is, $\diam{G}=\max\gset{\dist{G}{x}{y}}{x,y\in V}$. We notice that the diameter of $G$ is finite if and only if $G$ is connected.

Let $K\subseteq V$. We say that $K$ is a \defterm{clique} in $G$ if $\braces{x,y}\in E$ for all $x,y\in K$, that is, if the subgraph of $G$ induced by $K$ is complete. The \defterm{clique number} of $G$, denoted $\cliquenumber{G}$, is the size of a largest clique in $G$, that is, $\cliquenumber{G}=\max\left\{|K|: K \text{ is a clique in } G\right\}$.

If the graph $G$ contains cycles, then the \defterm{girth} of $G$, denoted $\girth{G}$, is the length of a shortest cycle in $G$. If $G$ contains no cycles, then $\girth{G}=\infty$.

The \defterm{chromatic number} of $G$, denoted $\chromaticnumber{G}$, is the minimum number of colours required to colour the vertices of $G$ in a way such that adjacent vertices have different colours.

Let $G=(V,E)$ and $H=(V',E')$ be two simple graphs. We can assume, without loss of generality, that $V\cap V'=\emptyset$. In what follows we describe two graph operations that will be useful for characterizing commuting graphs in the next two sections.

The \defterm{graph join} of $G$ and $H$, denoted $G\graphjointwo H$, is defined to be the (simple) graph whose set of vertices is $V\cup V'$ and whose set of edges is $E\cup E'\cup\gset{\set{x,y}}{x\in V \text{ and } y\in V'}$. This means that, in the graph $G\graphjointwo H$, two vertices $x,y\in V\cup V'$ are adjacent if and only if one of the following conditions is satisfied:
\begin{enumerate}
	\item $x\in V$ and $y\in V'$ (or vice versa).
	\item $x,y\in V$ and $\braces{x,y}\in E$ (or $x,y\in V'$ and $\braces{x,y}\in E'$).
\end{enumerate}
It is straightforward to see that the graph join is an associative operation (in the sense that, if $G_1, G_2, G_3$ are simple graphs, then $\parens{G_1\graphjointwo G_2}\graphjointwo G_3$ is isomorphic to $G_1 \graphjointwo \parens{G_2\graphjointwo G_3}$). Furthermore, if $n\in\mathbb{N}$ and $G_i=\parens{V_i,E_i}$ is a simple graph for all $i\in\Xn$, then their graph join $\graphjoin_{i=1}^n G_i$ is (up to isomorphism) the graph with vertex set $\bigcup_{i=1}^n V_i$ and where two vertices $x$ and $y$ are adjacent if and only if one of the following conditions holds:
\begin{enumerate}
	\item There exist distinct $i,j\in\Xn$ such that $x\in V_i$ and $y\in V_j$.
	\item There exists $i\in\Xn$ such that $x,y\in V_i$ and $\braces{x,y}\in E_i$.
\end{enumerate}
This means that $\graphjoin_{i=1}^n G_i$ can be obtained from the graphs $G_1,\ldots,G_n$ by making all of the vertices of $G_i$ adjacent to all of the vertices of $G_j$ for all distinct $i,j\in\Xn$. 

The next lemma, which is easy to prove, shows the relationship between the clique and chromatic numbers of two graphs and of their graph join.  

\begin{lemma}\label{preli: graph join clique/chromatic numbers}
	Let $G$ and $H$ be two simple graphs. Then
	\begin{enumerate}
		\item $\cliquenumber{G\graphjointwo H}=\cliquenumber{G}+\cliquenumber{H}$.
		
		\item $\chromaticnumber{G\graphjointwo H}=\chromaticnumber{G}+\chromaticnumber{H}$.
	\end{enumerate}
\end{lemma}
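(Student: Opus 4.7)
The plan is to prove the two equalities by establishing each inequality separately, using the defining property of the graph join that every vertex of $G$ is adjacent in $G\graphjointwo H$ to every vertex of $H$.

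For the clique number, I would first show $\cliquenumber{G\graphjointwo H}\geq\cliquenumber{G}+\cliquenumber{H}$ by taking a maximum clique $K_G$ in $G$ and a maximum clique $K_H$ in $H$ and arguing that $K_G\cup K_H$ is a clique in $G\graphjointwo H$: any two vertices from the same side are adjacent by the clique property plus the fact that the induced subgraphs on $V$ and $V'$ inside $G\graphjointwo H$ are exactly $G$ and $H$, while vertices from opposite sides are adjacent by condition (1) in the definition of the graph join. For the reverse inequality, I would take a maximum clique $K$ in $G\graphjointwo H$, write it as $K=K_G\sqcup K_H$ with $K_G=K\cap V$ and $K_H=K\cap V'$, and observe that $K_G$ is a clique in $G$ and $K_H$ is a clique in $H$ (again because edges of $G\graphjointwo H$ internal to $V$ or to $V'$ come only from $E$ or $E'$), giving $\cliquenumber{G\graphjointwo H}=\abs{K}=\abs{K_G}+\abs{K_H}\leq\cliquenumber{G}+\cliquenumber{H}$.

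For the chromatic number, the upper bound $\chromaticnumber{G\graphjointwo H}\leq\chromaticnumber{G}+\chromaticnumber{H}$ would follow by taking proper colourings of $G$ and $H$ using $\chromaticnumber{G}$ and $\chromaticnumber{H}$ colours respectively, chosen from disjoint palettes, and combining them into a colouring of $G\graphjointwo H$; this is proper because conflicts internal to $V$ or $V'$ are ruled out by the original colourings and cross-edges are safe since the two palettes are disjoint. For the lower bound, I would take any proper colouring $c$ of $G\graphjointwo H$ using $\chromaticnumber{G\graphjointwo H}$ colours and note that, since every vertex of $V$ is adjacent to every vertex of $V'$, the colour sets $c(V)$ and $c(V')$ must be disjoint; the restrictions of $c$ to $V$ and $V'$ are proper colourings of $G$ and $H$, forcing $\abs{c(V)}\geq\chromaticnumber{G}$ and $\abs{c(V')}\geq\chromaticnumber{H}$, and thus $\chromaticnumber{G\graphjointwo H}\geq\abs{c(V)}+\abs{c(V')}\geq\chromaticnumber{G}+\chromaticnumber{H}$.

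There is no real obstacle here: the whole argument rests on the two structural observations that the subgraph of $G\graphjointwo H$ induced by $V$ (respectively $V'$) is $G$ (respectively $H$), and that the set of $V$–$V'$ edges is a complete bipartite graph. The only point that needs a little care in writing is making the partition $K=K_G\sqcup K_H$ and the disjointness of colour palettes rigorous via the assumption $V\cap V'=\emptyset$ that was set up just before the statement.
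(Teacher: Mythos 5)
Your argument is correct and complete; the paper itself omits the proof of this lemma, stating only that it is ``easy to prove,'' and your two-sided argument (splitting a clique of $G\graphjointwo H$ as $K\cap V$ and $K\cap V'$, and using disjoint colour palettes together with the observation that the complete bipartite connection forces the colour sets on $V$ and $V'$ to be disjoint) is exactly the standard proof the author is alluding to. No gaps.
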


The \defterm{strong product} of $G$ and $H$, denoted $G\boxtimes H$ is the (simple) graph whose set of vertices is $V\times V'$ and where two vertices $\parens{x_1,x_2}$ and $\parens{y_1,y_2}$ are adjacent if and only if one of the following three conditions is satisfied:
\begin{enumerate}
	\item $x_1=y_1$ and $\set{x_2,y_2}\in E'$.
	\item $\set{x_1,y_1}\in E$ and $x_2=y_2$.
	\item $\set{x_1,y_1}\in E$ and $\set{x_2,y_2}\in E'$.
\end{enumerate}
If we use the notation introduced above, then we have that $\parens{x_1,x_2}$ and $\parens{y_1,y_2}$ are adjacent if and only if $\parens{x_1,x_2}\neq\parens{y_1,y_2}$, $x_1\sim y_1$ (in $G$) and $x_2\sim y_2$ (in $H$). It is easy to see that the strong product of graphs is an associative operation (in the sense that, if $G_1, G_2, G_3$ are simple graphs, then $\parens{G_1\boxtimes G_2}\boxtimes G_3$ and $G_1\boxtimes\parens{G_2\boxtimes G_3}$ are isomorphic). Furthermore, if $n\in\mathbb{N}$ and $G_i=\parens{V_i,E_i}$ is a simple graph for all $i\in\Xn$, then their strong product $\strongprod_{i=1}^n G_i$ is (up to isomorphism) the graph with vertex set $\prod_{i=1}^n V_i$ and where two vertices $\parens{x_1,\ldots,x_n}$ and $\parens{y_1,\ldots,y_n}$ are adjacent if and only if $\parens{x_1,\ldots,x_n}\neq \parens{y_1,\ldots,y_n}$ and $x_i\sim y_i$ (in $G_i$) for all $i\in\Xn$.

The next lemma, which is easy to prove, provides a way to determine the clique number of the strong product of two graphs, as well as an upper bound for its chromatic number, using the clique and chromatic numbers, respectively, of the two graphs. 

\begin{theorem}\label{preli: strong prod clique/chromatic numbers}
	Let $G=(V,E)$ and $H=(V',E')$ be two simple graphs. Then
	\begin{enumerate}
		\item $\cliquenumber{G\boxtimes H}=\cliquenumber{G}\cdot\cliquenumber{H}$.
		
		\item $\chromaticnumber{G\boxtimes H}\leqslant\chromaticnumber{G}\cdot\chromaticnumber{H}$.
	\end{enumerate}
\end{theorem}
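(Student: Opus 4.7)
The plan is to prove each part by reducing to the component graphs, exploiting that the strong-product adjacency condition ``$\parens{x_1,x_2}\neq\parens{y_1,y_2}$, $x_1\sim y_1$ in $G$ and $x_2\sim y_2$ in $H$'' separates cleanly into coordinates.

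For part (1) I would establish the two inequalities separately. For the lower bound $\cliquenumber{G\boxtimes H}\geqslant\cliquenumber{G}\cdot\cliquenumber{H}$, take maximum cliques $K\subseteq V$ in $G$ and $K'\subseteq V'$ in $H$ and verify directly that $K\times K'$ is a clique in $G\boxtimes H$: for any two distinct elements $\parens{x_1,x_2},\parens{y_1,y_2}\in K\times K'$ the clique hypothesis immediately gives $x_1\sim y_1$ and $x_2\sim y_2$, which is precisely what the strong-product adjacency requires. For the upper bound, take a maximum clique $L$ in $G\boxtimes H$ and set $K=\gset{x\in V}{\parens{x,y}\in L\text{ for some }y\in V'}$ and $K'=\gset{y\in V'}{\parens{x,y}\in L\text{ for some }x\in V}$. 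Given distinct $x_1,y_1\in K$, pick $\parens{x_1,x_2},\parens{y_1,y_2}\in L$: these are distinct elements of the clique $L$, so they are adjacent, and since their first coordinates differ the strong-product definition forces $\braces{x_1,y_1}\in E$. Hence $K$ is a clique in $G$ and symmetrically $K'$ is a clique in $H$. Since $L\subseteq K\times K'$, we conclude $\abs{L}\leqslant\abs{K}\cdot\abs{K'}\leqslant\cliquenumber{G}\cdot\cliquenumber{H}$, which yields equality.

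For part (2) I would construct a proper colouring of $G\boxtimes H$ as a product of proper colourings of $G$ and $H$. If $c_G\colon V\to\X{n}$ with $n=\chromaticnumber{G}$ and $c_H\colon V'\to\X{m}$ with $m=\chromaticnumber{H}$ are proper colourings, then defining $c\parens{x,y}=\parens{c_G(x),c_H(y)}$ uses at most $\chromaticnumber{G}\cdot\chromaticnumber{H}$ colours. To see that $c$ is proper, suppose $\parens{x_1,x_2}$ and $\parens{y_1,y_2}$ are adjacent in $G\boxtimes H$ with $c\parens{x_1,x_2}=c\parens{y_1,y_2}$; adjacency forces at least one coordinate to differ, and in that coordinate we obtain an edge of $G$ (respectively $H$) whose endpoints share the same colour, contradicting properness of $c_G$ (respectively $c_H$).

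I do not anticipate any serious obstacle: both parts reduce to unpacking the strong-product adjacency condition. The only point requiring a small amount of care is the projection step in the upper bound for~(1), where one must pick elements of $L$ whose first (respectively second) coordinates are distinct so that the ``equal or adjacent'' alternative encoded by $\sim$ resolves to the edge case.
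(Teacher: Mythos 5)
Your proof is correct and complete; the paper states this result without proof (calling it ``easy to prove''), and your argument --- the product-of-cliques construction with the projection argument for the upper bound in part~(1), and the product colouring for part~(2) --- is exactly the standard argument one would supply. The one subtle point you flagged (that distinctness of the projected coordinates is what forces the ``$\sim$'' relation to resolve to an actual edge) is handled correctly.
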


\subsection{Commuting graphs and extended commuting graphs}
\label{sec: (extended) commgraph}

The \defterm{center} of a semigroup $S$ is the set
\begin{displaymath}
	Z(S)= \left\{x\in S: xy=yx \text{ for all } y\in S\right\}.
\end{displaymath}

Let $S$ be a finite non-commutative semigroup. The \defterm{commuting graph} of $S$, denoted $\commgraph{S}$, is the simple graph whose set of vertices is $S\setminus Z(S)$ and where two distinct vertices $x,y\in S\setminus Z(S)$ are adjacent if and only if $xy=yx$.

Let $S$ be a finite semigroup. The \defterm{extended commuting graph} of $S$, denoted $\extendedcommgraph{S}$, is the simple graph whose set of vertices is $S$ and where two distinct vertices $x,y\in S$ are adjacent if and only if $xy=yx$. (Some authors use this definition for commuting graphs, instead of the one presented in the previous paragraph. See, for instance, \cite{Graphs_arise_as_commuting_graphs_groups, Cameron_commuting_graphs_notes, Commuting_graphs_groups_split}.)

It follows from both definitions that, for all vertices $x$ and $y$ of $\commgraph{S}$ (respectively $\extendedcommgraph{S}$), we have $x\sim y$ if and only if $xy=yx$.

Note that in the first definition the semigroup must be non-commutative (because otherwise we would obtain an empty vertex set), but in the second one we allow the semigroup to be commutative. Furthermore, as a consequence of the first definition we have $\diam{\commgraph{S}}\geqslant 2$ because, since $S$ must be non-commutative, then there exist $x,y\in S$ such that $xy\neq yx$, which implies that $\diam{\commgraph{S}}\geqslant\dist{\commgraph{S}}{x}{y}>1$. Additionally, the second definition implies that the center of the semigroup is a clique in the extended commuting graph of the semigroup.

The next lemma, which is easy to prove, gives a characterization of the extended commuting graph of a semigroup. When the semigroup is not commutative, this characterization shows a relationship between the commuting graph and the extended commuting graph of the semigroup.

\begin{lemma}\label{preli: commgraph and extended commgraph}
	Let $S$ be a finite semigroup.
	\begin{enumerate}
		\item If $S$ is commutative, then $\extendedcommgraph{S}$ is isomorphic to $K_{\abs{S}}$.
		
		\item If $S$ is non-commutative, then $\extendedcommgraph{S}$ is isomorphic to $K_{\abs{Z\parens{S}}}\graphjointwo\commgraph{S}$.
	\end{enumerate}
\end{lemma}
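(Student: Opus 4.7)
The plan is to verify both statements directly from the definitions, taking in each case the identity map on vertices as the candidate isomorphism. For part (1), if $S$ is commutative, then for any two distinct elements $x,y\in S$ we have $xy=yx$, so $x$ and $y$ are adjacent in $\extendedcommgraph{S}$. Hence every pair of distinct vertices forms an edge, which is precisely the definition of $K_{\abs{S}}$.

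For part (2), suppose $S$ is non-commutative and partition the vertex set of $\extendedcommgraph{S}$ as the disjoint union $S = Z(S) \cup \parens{S \setminus Z(S)}$. I would then check three things. First, the subgraph of $\extendedcommgraph{S}$ induced by $Z(S)$ is complete on $\abs{Z(S)}$ vertices, because any two elements of $Z(S)$ commute (being central). Second, the subgraph induced by $S\setminus Z(S)$ coincides with $\commgraph{S}$, since both graphs have exactly the same vertex set and the same adjacency rule (commutation of distinct elements). Third, for any $z\in Z(S)$ and any $x\in S\setminus Z(S)$, we have $z\neq x$ and $zx=xz$ (as $z$ commutes with every element of $S$), so $z$ and $x$ are adjacent in $\extendedcommgraph{S}$.

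These three conditions are exactly the conditions describing the graph join $K_{\abs{Z(S)}}\graphjointwo\commgraph{S}$ from Section~\ref{sec: graphs}: the first two give the two induced subgraphs on the two parts, while the third says that all cross-edges between the parts are present. Hence the identity map on $S$ is an isomorphism from $\extendedcommgraph{S}$ to $K_{\abs{Z(S)}}\graphjointwo\commgraph{S}$.

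There is no real obstacle in this argument: the proof is essentially an unpacking of definitions, relying only on the observation that central elements are precisely those that commute with every element of $S$, and on matching the adjacency conditions of $\extendedcommgraph{S}$, $\commgraph{S}$, $K_n$ and the graph join. The only point worth pausing on is checking that the second definition of commuting graph requires $S$ non-commutative so that $\commgraph{S}$ is well-defined in case (2), which is guaranteed by the hypothesis of that case.
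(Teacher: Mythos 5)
Your proof is correct and is precisely the routine unpacking of definitions that the paper has in mind when it states this lemma without proof (``which is easy to prove''): part (1) is immediate, and part (2) correctly verifies the three adjacency conditions characterizing the graph join $K_{\abs{Z(S)}}\graphjointwo\commgraph{S}$ under the identity map on $S$. No issues.
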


The notions of left path and knit degree, which we define below, were introduced in \cite{Commuting_graph_T_X} to settle a conjecture (posed by Schein \cite{Schein_conjecture}) concerning the characterization of $r$-semisimple brands.

Let $S$ be a non-commutative semigroup. A \defterm{left path} in $\commgraph{S}$ is a path $x_1,\ldots,x_n$ in $\commgraph{S}$ such that $x_1\neq x_n$ and $x_1x_i=x_nx_i$ for all $i\in\braces{1,\ldots,n}$. If $\commgraph{S}$ contains left paths, then the \defterm{knit degree} of $S$, denoted $\knitdegree{S}$, is the length of a shortest left path in $\commgraph{S}$.

We now extend the concepts of left path and knit degree to extended commuting graphs of semigroups, and we will call them $*$-left path and $*$-knit degree instead. This new definition will be useful in Section~\ref{cha: G(direct product)} for deducing the knit degree of the commuting graph of a direct product of semigroups.

Let $S$ be a semigroup. A \defterm{$*$-left path} in $\extendedcommgraph{S}$ is a path $x_1,\ldots,x_n$ in $\extendedcommgraph{S}$ such that $x_1\neq x_n$ and $x_1x_i=x_nx_i$ for all $i\in\braces{1,\ldots,n}$. If $\extendedcommgraph{S}$ contains $*$-left paths, then the \defterm{$*$-knit degree} of $S$, denoted $\starknitdegree{S}$, is the length of a shortest $*$-left path in $\extendedcommgraph{S}$.

It is easy to see that, when $S$ is a non-commutative semigroup and $\commgraph{S}$ contains left paths, then $\extendedcommgraph{S}$ contains $*$-left paths and $\starknitdegree{S}\leqslant\knitdegree{S}$. The following lemma gives more information about $*$-left paths in $\extendedcommgraph{S}$ and the $*$-knit degree of $S$.

\begin{lemma}\label{preli: *-left paths and *-knit degree}
	\begin{enumerate}
		\item Suppose that $S$ is a commutative semigroup and $\extendedcommgraph{S}$ contains $*$-left paths. Then $\starknitdegree{S}=1$.
		
		\item Suppose that $S$ is a non-commutative semigroup and $\extendedcommgraph{S}$ contains left paths.
		\begin{enumerate}
			\item If $\extendedcommgraph{S}$ contains a $*$-left path that is not a left path in $\commgraph{S}$, then $\starknitdegree{S}\in\set{1,2}$.
			
			\item If all the $*$-left paths in $\extendedcommgraph{S}$ are left paths in $\commgraph{S}$, then $\starknitdegree{S}=\knitdegree{S}$.
		\end{enumerate}
	\end{enumerate}
\end{lemma}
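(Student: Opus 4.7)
The plan is to exploit the fact that $\commgraph{S}$ is precisely the subgraph of $\extendedcommgraph{S}$ induced by $S\setminus Z(S)$, while every element of $Z(S)$ is adjacent in $\extendedcommgraph{S}$ to every other element, and to show that the defining equations for a $*$-left path are inherited by any sub-path obtained by ``skipping through'' a central vertex or a pair of endpoints.

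For part (1), I would use \fullref{Lemma}{preli: commgraph and extended commgraph} to note that $\extendedcommgraph{S}\cong K_{\abs{S}}$, so in particular every two distinct elements are adjacent. Given any $*$-left path $x_1,\ldots,x_n$ with $n\geqslant 2$, the two-term sequence $x_1,x_n$ is already a path of length $1$ in $\extendedcommgraph{S}$, and the only equations to verify for the $*$-left path property are the $i=1$ and $i=n$ cases of the original, namely $x_1x_1=x_nx_1$ and $x_1x_n=x_nx_n$. Hence $\starknitdegree{S}=1$.

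For part (2)(a), I would pick a $*$-left path $x_1,\ldots,x_n$ in $\extendedcommgraph{S}$ that is not a left path in $\commgraph{S}$; since $\commgraph{S}$ is the induced subgraph on $S\setminus Z(S)$, this forces some vertex $x_j$ of the path to lie in $Z(S)$. If $x_1\in Z(S)$ or $x_n\in Z(S)$, then the endpoints are already adjacent in $\extendedcommgraph{S}$ and the same two equations used in part (1) show that $x_1,x_n$ is a $*$-left path of length $1$. Otherwise $x_j\in Z(S)$ for some $1<j<n$; then $x_j$ commutes with both $x_1$ and $x_n$, the sequence $x_1,x_j,x_n$ is a path of length $2$ in $\extendedcommgraph{S}$, and the equations $x_1x_i=x_nx_i$ for $i\in\set{1,j,n}$ inherited from the original give a $*$-left path of length $2$. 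In both cases $\starknitdegree{S}\in\set{1,2}$.

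For part (2)(b), the argument reduces to comparing two families of paths: since $\commgraph{S}$ is an induced subgraph of $\extendedcommgraph{S}$ and the defining equations coincide, every left path in $\commgraph{S}$ is automatically a $*$-left path in $\extendedcommgraph{S}$, giving $\starknitdegree{S}\leqslant\knitdegree{S}$ (as already noted in the paragraph before the lemma). The hypothesis of (2)(b) supplies the reverse inclusion of path families, hence the reverse inequality, and therefore equality. I do not foresee any real obstacle here; the one point that needs care is verifying that the ``shortcut'' subsequences really are paths (i.e., consist of pairwise distinct vertices adjacent in $\extendedcommgraph{S}$), which in each case follows immediately from $x_1\neq x_n$ and the centrality of the vertex being routed through.
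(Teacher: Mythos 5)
Your proposal is correct and follows essentially the same route as the paper: extract the length-one shortcut $x_1-x_n$ in the commutative case, route through a central vertex $x_m$ (distinguishing $m\in\{1,n\}$ from $1<m<n$) in case (2)(a), and combine the two inequalities $\starknitdegree{S}\leqslant\knitdegree{S}$ and $\knitdegree{S}\leqslant\starknitdegree{S}$ in case (2)(b). The only cosmetic difference is that you justify adjacency of $x_1$ and $x_n$ via $\extendedcommgraph{S}\cong K_{\abs{S}}$ rather than by directly noting $x_1x_n=x_nx_1$, which amounts to the same thing.
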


\begin{proof}
	\textbf{Part 1.} Suppose that $S$ is a commutative semigroup and $\extendedcommgraph{S}$ contains $*$-left paths. Let $x_1-x_2-\cdots-x_n$ be a $*$-left path in $\commgraph{S}$. Then $x_1\neq x_n$, $x_1^2=x_nx_1$ and $x_1x_n=x_n^2$. Furthermore, we have $x_1x_n=x_nx_1$ because $S$ is commutative. Thus $x_1-x_n$ is a $*$-left path in $\extendedcommgraph{S}$ and, consequently, $\starknitdegree{S}=1$.
	
	\medskip
	
	\textbf{Part 2.} Suppose that $S$ is a non-commutative semigroup and that $\extendedcommgraph{S}$ contains left paths.
	
	Assume that $\extendedcommgraph{S}$ contains a $*$-left path that is not a left path in $\commgraph{S}$. Let $x_1-x_2-\cdots-x_n$ be such a $*$-left path in $\extendedcommgraph{S}$. Since $x_1-x_2-\cdots-x_n$ is not a left path in $\commgraph{S}$, then there exists $m\in\X{n}$ such that $x_m\in \centre{S}$. Hence $x_1x_m=x_mx_1$ and $x_nx_m=x_mx_n$. Furthermore, the fact that $x_1-x_2-\cdots-x_n$ is a $*$-left path in $\extendedcommgraph{S}$ implies that $x_1x_1=x_nx_1$ and $x_1x_n=x_nx_n$ and $x_1x_m=x_nx_m$. So, if $m\in\Xn\setminus\set{1,n}$, we have that $x_1-x_m-x_n$ is a $*$-left path (of length 2) in $\extendedcommgraph{S}$; and, if $m\in\set{1,n}$, we have that $x_1-x_n$ is a $*$-left path (of length 1) in $\extendedcommgraph{S}$. Thus $\starknitdegree{S}\leqslant 2$.
	
	
	
	Now assume that all the $*$-left paths in $\extendedcommgraph{S}$ are left paths in $\commgraph{S}$. Then $\knitdegree{S}\leqslant \starknitdegree{S}$. Additionally, by the paragraph before the lemma statement, we have $\starknitdegree{S}\leqslant\knitdegree{S}$, which concludes the proof.
\end{proof}

\section{The commuting graph of a zero-union}
\label{cha: G(zero-union)}

Let $n\in\mathbb{N}$. Let $S_1,\ldots,S_n$ be finite semigroups and let $S$ be their zero-union. We recall that a \defterm{zero-union} of $n$ semigroups $S_1,\ldots,S_n$, which we assume to be disjoint, is the set $\set{0}\cup\bigcup_{i=1}^n S_i$, where $0$ is a new element, and where the product of any two elements $x$ and $y$ is equal to the element $xy\in S_i$, if $x,y\in S_i$ for some $i\in\Xn$, and $0$ for the remaining cases. We partition $\Xn$ as $C\cup\NC$, where
\begin{align*}
	&C=\gset{i\in\Xn}{S_i \text{ is commutative}},\\
	&\NC=\gset{i\in\Xn}{S_i \text{ is not commutative}}. 
\end{align*}

The aim of this section is to study the graph $\commgraph{S}$ in terms of its properties and see if there is any relationship between them and the properties of $\commgraph{S_i}$ for all $i\in\NC$. We are going to determine the diameter, clique number, girth, chromatic number and knit degree.

\begin{proposition}\label{0Union: Z(S)}
	We have $\centre{S}=\set{0}\cup\bigcup_{i=1}^n\centre{S_i}$. Moreover, $S$ is commutative if and only if $S_i$ is commutative for all $i\in\Xn$.
\end{proposition}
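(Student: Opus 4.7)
The plan is straightforward: both statements follow quickly from the defining multiplication of the zero-union. The key observation is that multiplication inside $S$ only "sees" multiplication within a single component $S_i$, while all cross-component products collapse to $0$.

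For the first claim, I will prove the two inclusions separately. For the inclusion $\set{0}\cup\bigcup_{i=1}^n\centre{S_i}\subseteq \centre{S}$, I will first note that $0\in\centre{S}$ because $0$ is a zero of $S$, so $0y=0=y0$ for every $y\in S$. Then I will take $x\in \centre{S_i}$ for some $i\in\Xn$ and verify $xy=yx$ for an arbitrary $y\in S$ by cases on which component $y$ lives in: if $y=0$ then $xy=0=yx$; if $y\in S_i$ then $xy=yx$ by the assumption $x\in\centre{S_i}$; and if $y\in S_j$ with $j\neq i$ then $xy=0=yx$ by the definition of the zero-union. For the reverse inclusion, I take $x\in\centre{S}$ with $x\neq 0$, which means $x\in S_i$ for some $i\in\Xn$; then for every $y\in S_i\subseteq S$ the equation $xy=yx$ holds in $S$, and since both products lie in $S_i$ this is an equality in $S_i$, giving $x\in\centre{S_i}$.

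For the moreover statement, I will use the characterization of commutativity as $\centre{S}=S$. If each $S_i$ is commutative then $\centre{S_i}=S_i$, so by the first part $\centre{S}=\set{0}\cup\bigcup_{i=1}^n S_i=S$, whence $S$ is commutative. Conversely, if $S$ is commutative, then for any $i\in\Xn$ and any $x,y\in S_i\subseteq S$ we have $xy=yx$, and since both sides lie in $S_i$ this equality holds in $S_i$, so $S_i$ is commutative.

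I do not expect any real obstacle here; the only thing to be careful about is the case split when verifying that an element of $\centre{S_i}$ commutes with everything in $S$, in particular handling the element $0$ and the elements of the other components $S_j$ ($j\neq i$) separately from elements of the same component $S_i$.
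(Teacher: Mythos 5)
Your proposal is correct and follows essentially the same route as the paper: both inclusions are verified by the same case analysis on which component an element lies in, and the commutativity statement is deduced from $\centre{S}=S$ together with the disjointness of $\set{0},S_1,\ldots,S_n$. The only cosmetic difference is that you argue the converse of the ``moreover'' part directly rather than via the chain of set equalities, which changes nothing of substance.
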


\begin{proof}
	First we are going to prove that $\centre{S}\subseteq\set{0}\cup\bigcup_{i=1}^n\centre{S_i}$. Let $x\in\centre{S}$. We have $x\in\set{0}\cup\bigcup_{i=1}^n S_i$ and $xy=yx$ for all $y\in\set{0}\cup\bigcup_{i=1}^n S_i$. If $x=0$, then $x\in\set{0}\cup\bigcup_{i=1}^n S_i$. If $x\in S_i$ for some $i\in\Xn$, then it follows from the fact that $xy=yx$ for all $y\in S_i$ that $x\in\centre{S_i}\subseteq \set{0}\cup\bigcup_{i=1}^n \centre{S_i}$. Therefore $\centre{S}\subseteq\set{0}\cup\bigcup_{i=1}^n \centre{S_i}$.
	
	Now we prove the opposite inclusion. Let $i\in\Xn$ and $x\in\centre{S_i}$. Then $xy=yx$ for all $y\in S_i$. We also have $0x=0=x0$ and $xy=0=yx$ for all $j\in\Xn\setminus\set{i}$ and $y\in S_j$. Thus $x\in\centre{S}$. Additionally, it is clear that $0\in\centre{S}$. Therefore $\set{0}\cup\bigcup_{i=1}^n \centre{S_i}\subseteq\centre{S}$.
	
	Moreover, since $\set{0},S_1,\ldots,S_n$ are pairwise disjoint, we have
	\begin{align*}
		&S \text{ is commutative}\\
		\iff{} &\centre{S}=S\\
		\iff{} &\set{0}\cup\bigcup_{i=1}^n\centre{S_i}=\set{0}\cup\bigcup_{i=1}^n S_i\\
		\iff{} &\centre{S_i}=S_i\text{ for all }i\in\Xn\\
		\iff{} &S_i \text{ is commutative for all } i\in\Xn.\qedhere
	\end{align*}
\end{proof}

It follows from Proposition~\ref{0Union: Z(S)} that $S$ is not commutative if and only if $\NC\neq\emptyset$. In this situation, we have that due to the fact that $\set{0},S_1,\ldots,S_n$ are pairwise disjoint and $\centre{S_i}=S_i$ for all $i\in C$, the set of vertices of $\commgraph{S}$ is
\begin{displaymath}
	S\setminus\centre{S}=\parens[\bigg]{\set{0}\cup\bigcup_{i=1}^n S_i}\setminus\parens[\bigg]{\set{0}\cup\bigcup_{i=1}^n\centre{S_i}}=\bigcup_{i=1}^n\ S_i\setminus\centre{S_i}=\bigcup_{i\in\NC} S_i\setminus\centre{S_i}.
\end{displaymath}
This implies that the elements of the commutative semigroups (that is, the elements of $S_i$ for all $i\in C$) are not vertices of $\commgraph{S}$.

We consider two situations: $\abs{\NC}=1$ and $\abs{\NC}\geqslant 2$. In Theorem~\ref{0Union: S_i commutative, S_j non-commutative} we characterize $\commgraph{S}$ when we consider the former situation, and in Theorem~\ref{0Union: characterization S, S1 and S2 non-commutative} we characterize $\commgraph{S}$ when we consider the latter. Additionally, for the last case we also obtain the clique number (Corollary~\ref{0Union: clique number}), chromatic number (Corollary~\ref{0Union: chromatic number}), diameter (Corollary~\ref{0Union: diameter}), girth (Theorem~\ref{0Union: girth}) and knit degree (Theorem~\ref{0Union: knit degree}).

\begin{theorem}\label{0Union: S_i commutative, S_j non-commutative}
	Supppose that $\NC=\set{j}$. Then $\commgraph{S}=\commgraph{S_j}$.
\end{theorem}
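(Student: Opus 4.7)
The proof is essentially a two-line verification, so my plan is simply to unpack the vertex sets and edge relations and observe that they coincide.

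First I would identify the vertex sets. By Proposition~\ref{0Union: Z(S)} and the paragraph immediately following it, the vertex set of $\commgraph{S}$ is $\bigcup_{i\in\NC}S_i\setminus\centre{S_i}$. When $\NC=\set{j}$, this collapses to $S_j\setminus\centre{S_j}$, which is precisely the vertex set of $\commgraph{S_j}$.

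Next I would check the edges. Let $x,y\in S_j\setminus\centre{S_j}$ be distinct. Because $S_j\subseteq S$ and multiplication in $S$ restricted to $S_j$ agrees with multiplication in $S_j$ (by the definition of zero-union), we have $xy=yx$ in $S$ if and only if $xy=yx$ in $S_j$. Thus $\set{x,y}$ is an edge of $\commgraph{S}$ if and only if it is an edge of $\commgraph{S_j}$.

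Combining these two observations yields the equality $\commgraph{S}=\commgraph{S_j}$. There is no real obstacle here; the only subtlety worth spelling out is that the sole non-commutative factor contributes all the non-central vertices (the commutative factors $S_i$ with $i\in C$ satisfy $\centre{S_i}=S_i$ and so contribute nothing), and that the zero-union construction does not introduce any new commuting relations among elements of a single $S_i$.
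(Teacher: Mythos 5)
Your proof is correct and follows essentially the same route as the paper's: identify the vertex set of $\commgraph{S}$ as $S_j\setminus\centre{S_j}$ using the description of $\centre{S}$, then observe that adjacency for distinct non-central elements of $S_j$ is the same in both graphs because the zero-union multiplication restricted to $S_j$ is just the multiplication of $S_j$. Nothing is missing.
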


\begin{proof}
	Since $\NC=\set{j}$, then
	\begin{displaymath}
		S\setminus\centre{S}= \bigcup_{i\in\NC} S_i\setminus\centre{S_i}=S_j\setminus\centre{S_j},
	\end{displaymath}
	which means that the set of vertices of $\commgraph{S}$ is equal to the set of vertices of $\commgraph{S_j}$. Furthermore, it is clear that given distinct $x,y\in S\setminus\centre{S}=S_j\setminus\centre{S_j}$, we have
	\begin{align*}
		& x \text{ and } y \text{ are adjacent in } \commgraph{S}\\
		\iff{} & xy=yx\\
		\iff{} & x \text{ and } y \text{ are adjacent in } \commgraph{S_j},
	\end{align*}
	which implies that the set of edges of $\commgraph{S}$ is equal to the set of edges of $\commgraph{S_j}$.
\end{proof}

As a consequence of Theorem~\ref{0Union: S_i commutative, S_j non-commutative} we have that, when $\NC=\set{j}$, then each one of the properties of $\commgraph{S}$ coincide with the corresponding properties of the $\commgraph{S_j}$. Furthermore, since $S_j\subseteq S$, it is also true that ($\commgraph{S}$ contains left paths if and only if $\commgraph{S_j}$ contains left paths and) $\knitdegree{S}=\knitdegree{S_j}$.

\begin{theorem}\label{0Union: characterization S, S1 and S2 non-commutative}
	Suppose that $\abs{\NC}\geqslant 2$. Then $\commgraph{S}=\graphjoin_{i\in \NC}\commgraph{S_i}$.
\end{theorem}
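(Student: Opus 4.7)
The plan is to verify directly that $\commgraph{S}$ and $\graphjoin_{i\in NC}\commgraph{S_i}$ coincide by checking that their vertex sets agree and that their adjacency relations agree. Both steps follow immediately from the computation of $\centre{S}$ in Proposition~\ref{0Union: Z(S)} together with the multiplication rule in the zero-union.

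First I would handle the vertex sets. By the observation made right after Proposition~\ref{0Union: Z(S)}, the vertex set of $\commgraph{S}$ is $\bigcup_{i\in NC} S_i\setminus\centre{S_i}$, which is exactly the vertex set of $\graphjoin_{i\in NC}\commgraph{S_i}$ (recall that for $i\in C$ the graph $\commgraph{S_i}$ is not defined, so only the non-commutative factors contribute vertices to the graph join, matching what happens in $\commgraph{S}$).

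Next I would verify the adjacency relation by taking distinct vertices $x,y\in S\setminus\centre{S}$ and splitting into two cases according to whether $x$ and $y$ lie in the same factor. If $x\in S_i$ and $y\in S_j$ for distinct $i,j\in NC$, then $xy=0=yx$ by definition of the zero-union, so $x$ and $y$ are adjacent in $\commgraph{S}$; and they are also adjacent in $\graphjoin_{i\in NC}\commgraph{S_i}$ by the first condition in the definition of the graph join. If instead $x,y\in S_i$ for a single $i\in NC$, then the products $xy$ and $yx$ are computed inside $S_i$, so $xy=yx$ in $S$ if and only if $xy=yx$ in $S_i$; that is, $x\sim y$ in $\commgraph{S}$ if and only if $x\sim y$ in $\commgraph{S_i}$, which matches the second condition in the definition of the graph join.

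There is no genuine obstacle here: once the vertex set has been identified using Proposition~\ref{0Union: Z(S)}, every adjacency question reduces to the observation that elements of distinct factors annihilate each other to $0$ and therefore automatically commute, while elements of the same factor multiply as in that factor. The only bookkeeping point to be careful about is that vertices from the commutative factors (those indexed by $C$) contribute nothing on either side, so the assumption $|NC|\geqslant 2$ is precisely what is needed to make the graph join on the right-hand side meaningful and non-trivial.
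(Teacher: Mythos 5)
Your proposal is correct and follows essentially the same route as the paper: identify the vertex set of $\commgraph{S}$ as $\bigcup_{i\in\NC}S_i\setminus\centre{S_i}$ via Proposition~\ref{0Union: Z(S)}, check that each $\commgraph{S_i}$ sits inside $\commgraph{S}$ as an induced subgraph, and use $xy=0=yx$ for elements of distinct factors to get all cross-factor adjacencies. No gaps.
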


\begin{proof}
	In order to prove that $\commgraph{S}=\graphjoin_{i\in \NC}\commgraph{S_i}$ it is enough to verify that the set of vertices of $\commgraph{S}$ is equal to the union of the (disjoint) sets of vertices of $\commgraph{S_i}$ for all $i\in\NC$, that $\commgraph{S_i}$ is an induced subgraph of $\commgraph{S}$ for all $i\in\NC$, and that all the vertices of $\commgraph{S_i}$ are adjacent to all the vertices of $\commgraph{S_j}$ for all distinct $i,j\in\NC$.
	
	We have
	\begin{displaymath}
		S\setminus\centre{S}=\bigcup_{i\in\NC} S_i\setminus\centre{S_i}.
	\end{displaymath}
	Hence the set of vertices of $\commgraph{S}$ is equal to the union of the sets of vertices of $\commgraph{S_i}$ for all $i\in\NC$.
	
	Let $i\in\NC$ and $x,y\in S_i\setminus\centre{S_i}\subseteq S\setminus\centre{S}$ be such that $x\neq y$. We have
	\begin{align*}
		&x \text{ and } y \text{ are adjacent in } \commgraph{S}\\
		\iff{} & xy=yx\\
		\iff{} & x \text{ and } y \text{ are adjacent in } \commgraph{S_i}.
	\end{align*}
	Thus $\commgraph{S_i}$ is the subgraph of $\commgraph{S}$ induced by $S_i\setminus\centre{S_i}$.
	
	Let $i,j\in\Xn$ be such that $i\neq j$ and let $x\in S_i\setminus\centre{S_i}$ and $y\in S_j\setminus\centre{S_j}$. Since $xy=0=yx$, then $x$ and $y$ are adjacent in $\commgraph{S}$. This proves that all the vertices of $\commgraph{S_i}$ are adjacent to all the vertices of $\commgraph{S_j}$.
\end{proof}

Corollaries~\ref{0Union: clique number} and \ref{0Union: chromatic number} are direct consequences of Theorem~\ref{0Union: characterization S, S1 and S2 non-commutative} and (an iterated use of) Lemma~\ref{preli: graph join clique/chromatic numbers}. Furthermore, they establish a relationship between the clique number (respectively, chromatic number) of $\commgraph{S}$ and the clique numbers (respectively, chromatic numbers) of $\commgraph{S_i}$ for all $i\in\NC$.

\begin{corollary}\label{0Union: clique number}
	Suppose $\abs{NC}\geqslant 2$. Then $\cliquenumber{\commgraph{S}}=\sum_{i\in\NC}\cliquenumber{\commgraph{S_i}}$.
\end{corollary}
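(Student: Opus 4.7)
The plan is to derive this corollary directly from the structural description of $\commgraph{S}$ given by Theorem~\ref{0Union: characterization S, S1 and S2 non-commutative} together with the clique-number formula for graph joins in Lemma~\ref{preli: graph join clique/chromatic numbers}. Since $\abs{\NC}\geqslant 2$, Theorem~\ref{0Union: characterization S, S1 and S2 non-commutative} gives
\begin{displaymath}
	\commgraph{S}=\graphjoin_{i\in\NC}\commgraph{S_i},
\end{displaymath}
so the task reduces to computing the clique number of an iterated graph join.

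The key step is to extend Lemma~\ref{preli: graph join clique/chromatic numbers}(1), which handles the two-graph case, to an arbitrary finite number of summands. I would do this by a short induction on $\abs{\NC}$. The base case $\abs{\NC}=2$ is exactly Lemma~\ref{preli: graph join clique/chromatic numbers}(1). For the inductive step, using the associativity of the graph join noted in Section~\ref{sec: graphs}, one writes
\begin{displaymath}
	\graphjoin_{i\in\NC}\commgraph{S_i}\;\cong\;\Bigl(\graphjoin_{i\in\NC\setminus\{j\}}\commgraph{S_i}\Bigr)\graphjointwo\commgraph{S_j}
\end{displaymath}
for some fixed $j\in\NC$, applies Lemma~\ref{preli: graph join clique/chromatic numbers}(1) to split off $\commgraph{S_j}$, and then invokes the inductive hypothesis on the remaining join to obtain $\sum_{i\in\NC\setminus\{j\}}\cliquenumber{\commgraph{S_i}}+\cliquenumber{\commgraph{S_j}}=\sum_{i\in\NC}\cliquenumber{\commgraph{S_i}}$.

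There is no real obstacle here; the argument is a routine concatenation of a structural identity with an inductive application of a known formula. The only point that requires care is that the iterated graph join is well-defined up to isomorphism, which has already been noted. Since clique number is preserved under isomorphism, the conclusion follows immediately.
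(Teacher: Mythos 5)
Your argument is exactly the paper's: the corollary is stated there as a direct consequence of Theorem~\ref{0Union: characterization S, S1 and S2 non-commutative} combined with an iterated use of Lemma~\ref{preli: graph join clique/chromatic numbers}, which is precisely the induction you spell out. The proposal is correct and adds only the (routine) explicit inductive step that the paper leaves implicit.
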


\begin{corollary}\label{0Union: chromatic number}
	Suppose $\abs{NC}\geqslant 2$. Then $\chromaticnumber{\commgraph{S}}=\sum_{i\in\NC}\chromaticnumber{\commgraph{S_i}}$.
\end{corollary}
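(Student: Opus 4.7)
The plan is to derive this corollary directly from Theorem~\ref{0Union: characterization S, S1 and S2 non-commutative} together with an inductive application of Lemma~\ref{preli: graph join clique/chromatic numbers}(2). Since the characterization theorem gives
\[
\commgraph{S}=\graphjoin_{i\in\NC}\commgraph{S_i},
\]
the task reduces to showing that the chromatic number of an iterated graph join equals the sum of the chromatic numbers of its factors.

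More concretely, I would proceed by induction on $k=\abs{\NC}\geqslant 2$. The base case $k=2$ is exactly Lemma~\ref{preli: graph join clique/chromatic numbers}(2). For the inductive step, I would use the associativity of the graph join (noted explicitly in Section~\ref{sec: graphs}) to write
\[
\graphjoin_{i\in\NC}\commgraph{S_i} \;\cong\; \parens[\bigg]{\graphjoin_{i\in\NC\setminus\set{j}}\commgraph{S_i}} \graphjointwo \commgraph{S_j}
\]
for some fixed $j\in\NC$, then apply Lemma~\ref{preli: graph join clique/chromatic numbers}(2) once to split off the factor $\commgraph{S_j}$, and invoke the inductive hypothesis on the remaining join of $k-1$ graphs.

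I do not foresee any genuine obstacle here: the result is essentially a bookkeeping step. The only mild caveat is to make sure the inductive setup is legitimate in this paper's framework, where the graph join has been introduced via a binary operation and then extended to finite families by associativity; one should briefly invoke that extension so the symbol $\graphjoin_{i\in\NC}$ is unambiguous. Apart from that, the argument is purely an iteration of Lemma~\ref{preli: graph join clique/chromatic numbers}(2) applied to the decomposition supplied by Theorem~\ref{0Union: characterization S, S1 and S2 non-commutative}.
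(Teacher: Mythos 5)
Your proposal is correct and matches the paper exactly: the paper derives this corollary (together with the clique-number analogue) as a direct consequence of Theorem~\ref{0Union: characterization S, S1 and S2 non-commutative} and an iterated application of Lemma~\ref{preli: graph join clique/chromatic numbers}, which is precisely your induction on $\abs{\NC}$ using associativity of the graph join.
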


\begin{corollary}\label{0Union: diameter}
	Suppose $\abs{NC}\geqslant 2$. Then $\commgraph{S}$ is connected and $\diam{\commgraph{S}}=2$.
\end{corollary}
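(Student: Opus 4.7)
The plan is to leverage Theorem~\ref{0Union: characterization S, S1 and S2 non-commutative}, which tells us $\commgraph{S} = \graphjoin_{i\in \NC}\commgraph{S_i}$, together with the basic property of the graph join that vertices coming from different factors are automatically adjacent.

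First I would show $\diam{\commgraph{S}} \leqslant 2$. Let $x, y$ be two distinct vertices of $\commgraph{S}$. By the description of the vertex set, there exist $i, j \in \NC$ with $x \in S_i \setminus \centre{S_i}$ and $y \in S_j \setminus \centre{S_j}$. If $i \neq j$, then by Theorem~\ref{0Union: characterization S, S1 and S2 non-commutative} (all vertices of $\commgraph{S_i}$ are adjacent to all vertices of $\commgraph{S_j}$) we get $\dist{\commgraph{S}}{x}{y} = 1$. If $i = j$, then since $\abs{\NC} \geqslant 2$ we can pick some $k \in \NC$ with $k \neq i$; as $S_k$ is non-commutative, $S_k \setminus \centre{S_k}$ is non-empty and we can choose $z \in S_k \setminus \centre{S_k}$. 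Then $z$ is adjacent to both $x$ and $y$ in $\commgraph{S}$, so $x - z - y$ is a path and $\dist{\commgraph{S}}{x}{y} \leqslant 2$.

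Next I would observe that since $\abs{\NC} \geqslant 2 \geqslant 1$, Proposition~\ref{0Union: Z(S)} gives that $S$ is non-commutative, so by the remark following the definition of commuting graph in Section~\ref{sec: (extended) commgraph} we have $\diam{\commgraph{S}} \geqslant 2$. Combining this with the upper bound yields $\diam{\commgraph{S}} = 2$. Finally, connectedness follows because a graph is connected if and only if its diameter is finite.

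There is no real obstacle here; the proof is essentially a direct unpacking of the graph join structure established in Theorem~\ref{0Union: characterization S, S1 and S2 non-commutative}. The only small point to be careful about is ensuring that the auxiliary index $k$ chosen in the case $i = j$ actually yields a non-empty vertex set in $\commgraph{S_k}$, which is exactly what non-commutativity of $S_k$ guarantees.
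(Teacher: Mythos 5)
Your proof is correct and follows essentially the same route as the paper's: both use the graph-join characterization from Theorem~\ref{0Union: characterization S, S1 and S2 non-commutative} with the same two-case split on whether the vertices lie in the same factor, and both obtain the lower bound $\diam{\commgraph{S}}\geqslant 2$ from the non-commutativity of $S$. No gaps.
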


\begin{proof}
	Let $x,y\in S\setminus\centre{S}=\bigcup_{i\in\NC} S_i\setminus\centre{S_i}$ be two vertices of $\commgraph{S}$. It follows from Theorem~\ref{0Union: characterization S, S1 and S2 non-commutative} that $\commgraph{S}=\graphjoin_{i\in\NC}\commgraph{S_i}$. Then, we have the following two cases:
	
	\smallskip
	
	\textit{Case 1:} Assume that there exist distinct $j,k\in\NC$ such that $x\in S_j\setminus\centre{S_j}$ and $y\in S_k\setminus\centre{S_k}$. Then $x$ is a vertex of $\commgraph{S_j}$ and $y$ is a vertex of $\commgraph{S_k}$. Thus $x\sim y$ (in $\commgraph{S}$) and, consequently, $\dist{\commgraph{S}}{x}{y}\leqslant 1$.
	
	\smallskip
	
	\textit{Case 2:} Assume that there exist $j\in\NC$ such that $x,y\in S_j\setminus\centre{S_j}$. Let $k\in\NC\setminus\set{j}$ and let $z\in S_k\setminus\centre{S_k}$ be a vertex of $\commgraph{S}$. We have that $x$ and $y$ are vertices of $\commgraph{S_j}$ and $z$ is a vertex of $\commgraph{S_k}$. Then $x\sim z \sim y$ (in $\commgraph{S}$) and, consequently, $\dist{\commgraph{S}}{x}{y}\leqslant 2$.
	
	\smallskip
	
	It follows from cases 1 and 2 that $\diam{\commgraph{S}}\leqslant 2$. Moreover, due to the fact that $\abs{\NC}\geqslant 2$, there exists $j\in\Xn$ such that $S_j$ is not commutative. Hence there exist $x,y\in S_j$ such that $xy\neq yx$ and, consequently, $x$ and $y$ are not adjacent in $\commgraph{S}$, which implies that $\diam{\commgraph{S}}\geqslant \dist{\commgraph{S}}{x}{y}>1$.
\end{proof}

In the next theorem we are going to see that, when $\abs{\NC}\geqslant 2$, then the girth of $\commgraph{S}$ does not depend on the girth of the graphs $\commgraph{S_i}$, $i\in\NC$ (unlike what happens with the clique and chromatic numbers of $\commgraph{S}$). Instead, it depends on $\abs{\NC}$ and whether there exists $i\in\NC$ such that $\commgraph{S_i}$ is not a null graph.

\begin{theorem}\label{0Union: girth}
	Suppose that $\abs{\NC}\geqslant 2$. Then $\commgraph{S}$ contains cycles. Moreover,
	\begin{enumerate}
		\item If $\abs{\NC}\geqslant 3$ or there exists $i\in \NC$ such that $\commgraph{S_i}$ is not a null graph, then $\girth{\commgraph{S}}=3$.
		
		\item If $\abs{\NC}=2$ and $\commgraph{S_i}$ is a null graph for all $i\in \NC$, then $\girth{\commgraph{S}}=4$.
	\end{enumerate}
\end{theorem}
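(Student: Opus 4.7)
The plan is to lean entirely on the previous theorem, which identifies $\commgraph{S}$ with $\graphjoin_{i\in\NC}\commgraph{S_i}$. From this, every two vertices lying in different pieces of the join are adjacent, while adjacency within a single piece $\commgraph{S_i}$ is inherited from $\commgraph{S_i}$. I will use this to locate the shortest cycles.

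First I would record the basic fact that for each $i\in\NC$, the semigroup $S_i$ is non-commutative, so there are $a,b\in S_i$ with $ab\neq ba$; hence $a,b\in S_i\setminus\centre{S_i}$ are two distinct vertices of $\commgraph{S_i}$. In particular $\abs{S_i\setminus\centre{S_i}}\geqslant 2$ for every $i\in\NC$, a fact I will use below.

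For Part 1, I would split into the two stated sub-cases and exhibit a triangle in each. If $\abs{\NC}\geqslant 3$, pick distinct indices $i,j,k\in\NC$ and any vertices $x\in S_i\setminus\centre{S_i}$, $y\in S_j\setminus\centre{S_j}$, $z\in S_k\setminus\centre{S_k}$. By the join description in Theorem~\ref{0Union: characterization S, S1 and S2 non-commutative}, these three vertices are pairwise adjacent, so $x-y-z-x$ is a cycle of length $3$. If instead some $\commgraph{S_i}$ is not a null graph, pick adjacent vertices $x,y$ in $\commgraph{S_i}$ and, using $\abs{\NC}\geqslant 2$, any vertex $z\in S_j\setminus\centre{S_j}$ for some $j\in\NC\setminus\set{i}$; then $x\sim y$ in $\commgraph{S_i}$ and both $x\sim z$, $y\sim z$ by the join, yielding again a triangle. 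Since the girth of a simple graph with a triangle is $3$, the conclusion follows.

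For Part 2, write $\NC=\set{i,j}$ and assume both $\commgraph{S_i}$ and $\commgraph{S_j}$ are null graphs. Then $\commgraph{S}=\commgraph{S_i}\graphjointwo\commgraph{S_j}$ is a complete bipartite graph with parts $S_i\setminus\centre{S_i}$ and $S_j\setminus\centre{S_j}$ (no edge can sit inside either part, since those induced subgraphs are null). A triangle would force two of its vertices to lie on the same side, contradicting that there are no edges there; so $\girth{\commgraph{S}}\geqslant 4$. On the other hand, by the preliminary observation each part has at least two distinct elements, so choose distinct $x_1,x_2\in S_i\setminus\centre{S_i}$ and distinct $y_1,y_2\in S_j\setminus\centre{S_j}$; then $x_1-y_1-x_2-y_2-x_1$ is a $4$-cycle, giving $\girth{\commgraph{S}}\leqslant 4$ and so $\girth{\commgraph{S}}=4$.

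The argument is almost entirely a direct reading of the join structure, so there is no serious obstacle. The one step that requires a moment's care is verifying the lower bound $\girth{\commgraph{S}}\geqslant 4$ in Part 2: one must observe that in the bipartite join of two null graphs every edge crosses between the two parts, so no triangle can close up within a single part.
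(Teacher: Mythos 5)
Your proof is correct and follows essentially the same route as the paper: both parts rest on the join decomposition $\commgraph{S}=\graphjoin_{i\in\NC}\commgraph{S_i}$, with the same triangles in Part~1 and the same $4$-cycle in Part~2. The only cosmetic difference is in the lower bound for Part~2, where you rule out triangles by a pigeonhole argument on the two parts of the complete bipartite graph, while the paper traces a path of length $3$ and checks its endpoints lie in different parts --- these are the same observation in different clothing.
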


\begin{proof}
	\textit{Case 1:} Suppose that $\abs{\NC}\geqslant 3$. Then there exist distinct $i,j,k\in\NC$. Let $x\in S_i\setminus\centre{S_i}$ be a vertex of $\commgraph{S_i}$, $y\in S_j\setminus\centre{S_j}$ be a vertex of $\commgraph{S_j}$ and $z\in S_k\setminus\centre{S_k}$ be a vertex of $\commgraph{S_k}$. Then, as a consequence of the characterization of $\commgraph{S}$ given by Theorem~\ref{0Union: characterization S, S1 and S2 non-commutative}, we have that $x$, $y$ and $z$ are vertices of $\commgraph{S}$ and they are adjacent to each other (in $\commgraph{S}$), which implies that $x-y-z-x$ is a cycle (of length 3) in $\commgraph{S}$. Thus $\girth{\commgraph{S}}=3$.
	
	\smallskip
	
	\textit{Case 2:} Suppose that there exists $i\in \NC$ such that $\commgraph{S_i}$ is not a null graph. Then there exist distinct $x,y\in S_i\setminus \centre{S_i}$ such that $x$ and $y$ are adjacent vertices of $\commgraph{S_i}$ (and, consequently, of $\commgraph{S}$). Let $j\in\NC\setminus\set{i}$ and let $z\in S_j\setminus\centre{S_j}$ be a vertex of $\commgraph{S_j}$. As a consequence of Theorem~\ref{0Union: characterization S, S1 and S2 non-commutative}, we have that $z$ is adjacent to $x$ and $y$ (in $\commgraph{S}$). Therefore, $x-y-z-x$ is a cycle (of length $3$) in $\commgraph{S}$ and, consequently, $\girth{\commgraph{S}}=3$.
	
	\smallskip
	
	\textit{Case 3:} Suppose that $\abs{\NC}=2$ and $\commgraph{S_i}$ is a null graph for all $i\in\NC$. Assume that $\NC=\set{i,j}$. Since $S_i$ and $S_j$ are not commutative, then there exist distinct $x,y\in S_i\setminus\centre{S_i}$ and distinct $z,w\in S_j\setminus\centre{S_j}$. If we have in mind the characterization of $\commgraph{S}$ given by Theorem~\ref{0Union: characterization S, S1 and S2 non-commutative}, then we can see that $x$ and $y$ are both adjacent to $z$ and $w$. Thus $x-z-y-w-x$ is a cycle (of length 4) in $\commgraph{S}$ and, consequently, $\girth{\commgraph{S}}\leqslant 4$.
	
	We only need to verify that $\commgraph{S}$ contains no cycles of length 3. Let $x_1-x_2-x_3-x_4$ be a path of length 3 in $\commgraph{S}$. It is enough to show that $x_1\neq x_4$. Assume, without loss of generality, that $x_2\in S_i\setminus\centre{S_i}$ (that is, $x_2$ is a vertex of $\commgraph{S_i}$). It follows from the fact that $\commgraph{S_i}$ is a null graph that there is no vertex of $\commgraph{S_i}$ that is adjacent to $x_2$. Thus $x_1,x_3\in S_j\setminus\centre{S_j}$ (that is, $x_1$ and $x_3$ are vertices of $\commgraph{S_j}$). Since $\commgraph{S_j}$ is also a null graph, then $x_4\in S_i\setminus\centre{S_i}$ is a vertex of $\commgraph{S_i}$. We just proved that $x_1\in S_j\setminus\centre{S_j}$ and $x_4\in S_i\setminus\centre{S_i}$, which implies that $x_1\neq x_4$. Thus $\commgraph{S}$ contains no cycles of length 3 and, as a consequence, we have $\girth{\commgraph{S}}=4$. 
\end{proof}

One of the necessary conditions for the commuting graph of a semigroup to contain left paths is the existence of distinct non-central elements $x$ and $y$ such that $x^2=yx$ and $y^2=yx$. Although in general this condition is not enough to guarantee the existence of left paths (see Example~\ref{0Union: example knit degree}), we will see in Theorem~\ref{0Union: knit degree} that this is true for zero-unions of semigroups (when $\abs{\NC}\geqslant 2$).

\begin{theorem}\label{0Union: knit degree}
	Suppose that $\abs{\NC}\geqslant 2$. Then $\commgraph{S}$ contains left paths if and only if there exist $i\in\NC$ and distinct $x,y\in S_i\setminus\centre{S_i}$ such that $x^2=yx$ and $y^2=xy$, in which case $\knitdegree{S}\in\set{1,2}$. Furthermore, $\knitdegree{S}=1$ if and only if there exists $i\in\NC$ such that $\commgraph{S_i}$ contains left paths and $\knitdegree{S_i}=1$.
\end{theorem}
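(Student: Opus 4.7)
The plan is to exploit the structural observation that drives every calculation in a zero-union: if $u \in S_a$ and $v \in S_b$ lie in different components, then $uv = vu = 0$, whereas $u^2 \in S_a$ can never equal $0$ (as $0 \notin S_a$). This dichotomy between ``within a component'' and ``across components'' will force the endpoints of any left path to share a common $S_i$, and it will make products involving auxiliary vertices from other components collapse to $0$.

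First I would handle the forward direction of the characterization. Assume $\commgraph{S}$ contains a left path $x_1, x_2, \ldots, x_n$, so $x_1 \neq x_n$ and $x_1 x_j = x_n x_j$ for all $j \in \X{n}$. Taking $j = 1$ gives $x_1^2 = x_n x_1$; by the observation above, $x_1$ and $x_n$ must lie in a common $S_i$, and since both are vertices of $\commgraph{S}$, in fact $x_1, x_n \in S_i \setminus \centre{S_i}$ with $i \in \NC$. Taking $j = n$ further yields $x_1 x_n = x_n^2$, so setting $x := x_1$ and $y := x_n$ gives distinct elements of $S_i \setminus \centre{S_i}$ satisfying $x^2 = yx$ and $y^2 = xy$, as required.

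For the converse, suppose such $i \in \NC$ and $x, y \in S_i \setminus \centre{S_i}$ exist. If $xy = yx$, then $x - y$ is already a left path of length $1$ in $\commgraph{S}$, giving $\knitdegree{S} = 1$. Otherwise, using $\abs{\NC} \geqslant 2$, I would choose $j \in \NC \setminus \set{i}$ and any $z \in S_j \setminus \centre{S_j}$. By Theorem~\ref{0Union: characterization S, S1 and S2 non-commutative}, $z$ is adjacent in $\commgraph{S}$ to both $x$ and $y$, so $x - z - y$ is a path of length $2$. Verifying the left-path condition reduces to checking $x^2 = yx$ (hypothesis), $xz = yz$ (both $0$, as $x, y \in S_i$ and $z \in S_j$), and $xy = y^2$ (hypothesis), so this is a left path and $\knitdegree{S} \leqslant 2$. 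Together with the trivial bound $\knitdegree{S} \geqslant 1$, this yields $\knitdegree{S} \in \set{1,2}$.

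Finally, for the ``furthermore'' statement, $\knitdegree{S} = 1$ is equivalent to the existence of a left path $x_1 - x_2$ of length $1$ in $\commgraph{S}$, i.e.\ distinct adjacent $x_1, x_2 \in S \setminus \centre{S}$ with $x_1^2 = x_2 x_1$ and $x_1 x_2 = x_2^2$. The same component argument forces $x_1, x_2 \in S_i \setminus \centre{S_i}$ for some $i \in \NC$, at which point $x_1 - x_2$ is a left path of length $1$ in $\commgraph{S_i}$, so $\knitdegree{S_i} = 1$. The reverse implication is immediate, since any left path of length $1$ inside some $\commgraph{S_i}$ with $i \in \NC$ lifts verbatim to one in $\commgraph{S}$. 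The only real subtlety throughout is bookkeeping which products vanish because the factors lie in different $S_i$'s and which do not; once that distinction is exploited via the opening observation, the argument runs through mechanically.
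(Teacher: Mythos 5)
Your proposal is correct and follows essentially the same route as the paper: the key observation that $x_1^2=x_nx_1$ forces the endpoints of a left path into a common non-commutative component, the construction of the length-$2$ left path $x-z-y$ through a vertex of another component (whose cross-products vanish), and the reduction of the $\knitdegree{S}=1$ case to a length-$1$ left path inside a single $\commgraph{S_i}$. The only cosmetic difference is your case split on whether $xy=yx$ in the converse direction, which the paper avoids by always using the path $x-z-y$; both yield $\knitdegree{S}\in\set{1,2}$.
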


\begin{proof}
	Suppose that $\commgraph{S}$ contains left paths. Let $x_1-x_2-\cdots-x_n$ be a left path in $\commgraph{S}$. Then $x_1\neq x_n$ and $x_1^2=x_nx_1$ and $x_n^2=x_1x_n$. Since $x_1$ and $x_n$ are vertices of $\commgraph{S}$, then $x_1,x_n\in \bigcup_{j\in\NC} S_j\setminus\centre{S_j}$. Let $i\in\NC$ be such that $x_1\in S_i\setminus\centre{S_i}$. Hence $x_nx_1=x_1^2\in S_i$, which implies, by the definition of a zero-union, that $x_n\in S_i\setminus\centre{S_i}$.
	
	Now suppose that there exist $i\in\NC$ and distinct $x,y\in S_i\setminus\centre{S_i}$ such that $x^2=yx$ and $y^2=xy$. Let $j\in\NC\setminus\set{i}$ and $z\in S_j\setminus\centre{S_j}$. Then $x$ and $y$ are vertices of $\commgraph{S_i}$ and $z$ is a vertex of $\commgraph{S_j}$. It follows from the characterization of $\commgraph{S}$ given by Theorem~\ref{0Union: characterization S, S1 and S2 non-commutative} that the vertex $z$ is adjacent to the vertices $x$ and $y$ (in $\commgraph{S}$). Hence $x-z-y$ is a path (of length 2) in $\commgraph{S}$. In addition, we have $x^2=xy$ and $xz=0=yz$ and $xy=y^2$. Hence $x-z-y$ is a left path in $\commgraph{S}$ and we have $\knitdegree{S}\leqslant 2$.
	
	The only thing left to do is to determine in which cases we have $\knitdegree{S}=1$. Suppose that $\knitdegree{S}=1$. This implies that there exist distinct $x,y\in S\setminus\centre{S}$ such that $x-y$ is a left path in $\commgraph{S}$. Then we have $xy=yx$ and $x^2=yx$ and $xy=y^2$. Since $x\in S\setminus\centre{S}=\bigcup_{j\in\NC} S_j\setminus\centre{S_j}$, then $x\in S_i\setminus\centre{S_j}$ for some $i\in\NC$. Hence $yx=x^2\in S_i$, and it follows from the definition of a zero-union that $y\in S_i\setminus\centre{S_i}$. Thus $x-y$ is a left path in $\commgraph{S_i}$ and $\knitdegree{S_i}=1$.
	
	Now suppose that there exists $i\in\NC$ such that $\commgraph{S_i}$ contains left paths and $\knitdegree{S_i}=1$. Hence there exist distinct $x,y\in S_i\setminus\centre{S_i}$ such that $x-y$ is a left path in $\commgraph{S_i}$. Since $\commgraph{S}=\graphjoin_{j\in\NC}\commgraph{S_j}$ (by Theorem~\ref{0Union: characterization S, S1 and S2 non-commutative}), then $x-y$ is also a left path in $\commgraph{S}$. Therefore $\knitdegree{S}=1$.
\end{proof}

\begin{example}\label{0Union: example knit degree}
	We consider the commuting graph of $\Tr{2}$ (the full transformation semigroup over $\set{1,2}$), which is shown in Figure~\ref{0Union, Figure: commuting graph of T_2}. We can see that $\commgraph{\Tr{2}}$ has no edges. Hence there are no paths of length greater than $1$ in $\commgraph{\Tr{2}}$, which implies that $\commgraph{\Tr{2}}$ contains no left paths. Nonetheless we have $\alpha^2=\alpha=\beta\alpha$ and $\beta^2=\beta=\alpha\beta$, where $\alpha=\begin{pmatrix}1&2\\ 1&1\end{pmatrix}$ and $\beta=\begin{pmatrix}1&2\\ 2&2\end{pmatrix}$.
		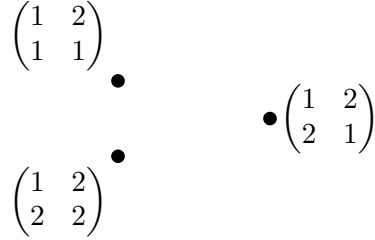
\begin{figure}[hbt]
				\begin{center}
						\begin{tikzpicture}
								
								\node[vertex] (11) at (0,0) {};
								\node[vertex] (22) at (0,-1) {};
								\node[vertex] (21) at (2,-0.5) {};
								
								\node[anchor=south east] at (11) {$\begin{pmatrix}1&2\\ 1&1\end{pmatrix}$};
								\node[anchor=north east] at (22) {$\begin{pmatrix}1&2 \\ 2&2\end{pmatrix}$};
								\node[anchor=west] at (21) {$\begin{pmatrix}1&2 \\ 2&1\end{pmatrix}$};
								
							\end{tikzpicture}
					\end{center}
				\caption{Commuting graph of the transformation semigroup $\Tr{2}$.}
				\label{0Union, Figure: commuting graph of T_2}
			\end{figure}
\end{example}

\section{The commuting graph of a direct product}
\label{cha: G(direct product)}

Let $n\in\mathbb{N}$ and let $S_1,\ldots,S_n$ be finite semigroups. We recall that the \defterm{direct product} of the semigroups $S_1,\ldots,S_n$ is the set $\prod_{i\in I} S_i$ (that is, the cartesian product of the semigroups $S_1,\ldots,S_n$) with componentwise multiplication: $\parens{i}\parens{st}=\parens{i}s\parens{i}t$ for all $s,t\in \prod_{i=1}^n S_i$.

Let $S=\prod_{i=1}^n S_i$. Given $s\in S$, we are going to denote the $i$-th component of $s$ by $s_i$, that is, $\parens{i}s=s_i$. We partition $\setX$ as $C\cup\NC$, where
\begin{align*}
	&C=\gset{i\in\setX}{S_i \text{ is commutative}},\\
	&\NC=\gset{i\in\setX}{S_i \text{ is not commutative}}. 
\end{align*}

The aim of this section is to determine the diameter, clique number, girth and chromatic number of $\commgraph{S}$, as well as the knit degree of $S$. Moreover, we want to find possible relations between the properties of $\commgraph{S}$ and the properties of $\commgraph{S_i}$ (for all $i \in \NC$) and $\extendedcommgraph{S_i}$ (for all $i \in C$), that is, we are going to see if we can obtain the properties of $\commgraph{S}$ by looking at the properties of $\commgraph{S_i}$ (for all $i\in\NC$) and $\extendedcommgraph{S_i}$ (for all $i\in C$).

The following lemma describes how commutativity works in $S$, that is, it provides a way to identify adjacent vertices in $\commgraph{S}$. It is immediate from the definition, but we state it here because we use it so often.

\begin{lemma}\label{direct prod: commutativity}
	Let $s,r\in S$. Then $sr=rs$ if and only if $s_ir_i=r_is_i$ for all $i\in\setX$.
\end{lemma}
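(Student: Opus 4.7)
The statement is essentially a direct unpacking of the definition of componentwise multiplication, so the plan is simply to spell that out carefully; the paper itself flags this as ``immediate from the definition'', so no obstacle is anticipated.

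First I would fix $s,r \in S$ and observe that, by the defining formula $(i)(st) = (i)s\,(i)t$ given just before the lemma, we have
\begin{equation*}
    (i)(sr) = s_i r_i \quad \text{and} \quad (i)(rs) = r_i s_i
\end{equation*}
for every $i \in \setX$.

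Next I would invoke the defining property of the Cartesian product $S = \prod_{i=1}^n S_i$: two tuples are equal if and only if they agree in every coordinate. Applied to the tuples $sr$ and $rs$, this gives
\begin{equation*}
    sr = rs \iff (i)(sr) = (i)(rs) \text{ for all } i \in \setX.
\end{equation*}
Combining this equivalence with the identities from the previous step yields
\begin{equation*}
    sr = rs \iff s_i r_i = r_i s_i \text{ for all } i \in \setX,
\end{equation*}
which is the claim. Both directions arise simultaneously from this chain of biconditionals, so no separate ``if'' and ``only if'' arguments are required.

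There is no genuinely hard step here — the entire content is the interchangeability of ``componentwise product'' with ``product in each component''. The only care needed is to keep the quantifier ``for all $i \in \setX$'' in the right place on the right-hand side, so that the equivalence is stated between a single equation in $S$ and a conjunction of $n$ equations in the factors $S_i$.
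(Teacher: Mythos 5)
Your proof is correct and follows exactly the route the paper intends: the paper gives no proof at all, remarking only that the lemma is ``immediate from the definition'', and your unpacking of componentwise multiplication together with coordinatewise equality of tuples is precisely that argument. Nothing is missing.
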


\begin{proposition}\label{direct prod: Z(S), S comm <=> S_i comm}
	We have $\centre{S}=\prod_{i=1}^n \centre{S_i}$. Furthermore, $S$ is commutative if and only if $S_i$ is commutative for all $i\in\setX$.
\end{proposition}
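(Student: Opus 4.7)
The plan is to establish the first assertion by a direct double-inclusion argument, using Lemma~\ref{direct prod: commutativity} to convert componentwise commutativity statements into global ones, and then deduce the second assertion by combining the first with the basic fact that a semigroup is commutative if and only if it coincides with its centre.

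For the inclusion $\prod_{i=1}^n \centre{S_i}\subseteq \centre{S}$, I would take an arbitrary $s\in\prod_{i=1}^n \centre{S_i}$ and an arbitrary $r\in S$; since $s_i\in\centre{S_i}$ for every $i\in\setX$, we have $s_ir_i=r_is_i$ for all $i$, and Lemma~\ref{direct prod: commutativity} then yields $sr=rs$, so $s\in\centre{S}$.

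For the reverse inclusion, the key is a coordinate-probing construction. Given $s\in\centre{S}$, I want to show $s_i\in\centre{S_i}$ for every $i\in\setX$. Fix $i\in\setX$ and $t\in S_i$; define $r\in S$ by $r_i=t$ and $r_j=s_j$ for $j\neq i$ (any fixed element of each $S_j$ would do, but choosing $s_j$ is convenient). From $sr=rs$ and Lemma~\ref{direct prod: commutativity}, we get $s_jr_j=r_js_j$ for all $j$, and in particular $s_it=ts_i$. Since $t$ was arbitrary, $s_i\in\centre{S_i}$. This is the only step with any subtlety; it is not hard, but one must be careful that the other semigroups $S_j$ are non-empty so that such an $r$ actually exists, which is automatic in the semigroup setting.

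For the second assertion, I would chain the following equivalences, which mirrors what was already done in Proposition~\ref{0Union: Z(S)}:
\begin{align*}
    S\text{ is commutative} &\iff \centre{S}=S\\
    &\iff \prod_{i=1}^n \centre{S_i}=\prod_{i=1}^n S_i\\
    &\iff \centre{S_i}=S_i\text{ for all }i\in\setX\\
    &\iff S_i\text{ is commutative for all }i\in\setX.
\end{align*}
The middle equivalence uses part one; the third uses that each $S_j$ is non-empty, so that an arbitrary $t\in S_i$ can be embedded in a tuple of $\prod_{j=1}^n S_j$ and hence, by the hypothesis, must lie in $\centre{S_i}$.
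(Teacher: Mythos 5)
Your proposal is correct and follows essentially the same route as the paper: the same coordinate-probing element $r$ (agreeing with $s$ off the chosen coordinate) for the inclusion $\centre{S}\subseteq\prod_{i=1}^n\centre{S_i}$, the same direct application of Lemma~\ref{direct prod: commutativity} for the reverse inclusion, and the same chain of equivalences for the commutativity statement. Nothing further is needed.
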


\begin{proof}
	First we prove the inclusion $\centre{S}\subseteq\prod_{i=1}^n \centre{S_i}$. Let $s\in \centre{S}$. Let $j\in\setX$ and $x\in S_j$. Let $r\in S$ be such that for all $i\in \setX$
	\begin{displaymath}
		r_i=\begin{cases}
			s_i& \text{if } i\neq j,\\
			x& \text{if } i=j.
		\end{cases}
	\end{displaymath}
	We have $sr=rs$, which implies, by Lemma~\ref{direct prod: commutativity}, that $s_jx=s_jr_j=r_js_j=xs_j$. Since $x$ is an arbitrary element of $S_j$, we have $s_j\in\centre{S_j}$. Therefore $s\in \prod_{i=1}^n\centre{S_i}$.
	
	Now we verify the opposite inclusion. Let $s\in \prod_{i=1}^n\centre{S_i}$. Then $s_i\in\centre{S_i}$ for all $i\in\setX$. Let $r\in S$. We have $s_ir_i=r_is_i$ for all $i\in\setX$, which implies, by Lemma~\ref{direct prod: commutativity}, that $sr=rs$. Since $r$ is an arbitrary element of $S$, it follows that $s\in\centre{S}$.
	
	Additionally, we have
	\begin{align*}
		S \text{ is commutative}&\iff S=\centre{S}\\
		&\iff \prod_{i=1}^n S_i=\prod_{i=1}^n \centre{S_i}\\
		&\iff S_i=\centre{S_i} \text{ for all } i\in \setX\\
		&\iff S_i \text{ is commutative for all } i\in\setX.\qedhere
	\end{align*}
\end{proof}

It follows from Proposition~\ref{direct prod: Z(S), S comm <=> S_i comm} that $s\in S$ is a vertex of $\commgraph{S}$ if and only if there exists $i\in \NC$ such that $s_i\in S_i\setminus \centre{S_i}$. Furthermore, as a consequence of Proposition~\ref{direct prod: Z(S), S comm <=> S_i comm}, we will assume for the reminder of the section that there exists $i\in \setX$ such that $S_i$ is not commutative, that is, we will assume that $\NC\neq\emptyset$. This way we guarantee that $S$ is not commutative and, consequently, that $\commgraph{S}$ is defined.

The first property of $\commgraph{S}$ that we investigate is its diameter (and in which situations $\commgraph{S}$ is connected). Theorem~\ref{direct prod: diameter} shows that the commutative semigroups $S_i$ ($i\in C$) do not interfere with the connectedness/diameter of $\commgraph{S}$. Furthermore, we will see that it is possible for $\commgraph{S}$ to be connected even when $\commgraph{S_i}$ is not connected for all $i\in\NC$.

\begin{lemma}\label{direct prod: lemma diameter}
	Suppose that $\NC\neq\emptyset$. Let $i\in\NC$ be such that $\NC=\set{i}$ or $\centre{S_i}=\emptyset$. If $\commgraph{S}$ is connected, then $\commgraph{S_i}$ is also connected. Furthermore, $\diam{\commgraph{S_i}}\leqslant\diam{\commgraph{S}}$.
\end{lemma}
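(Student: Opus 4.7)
The plan is to embed $\commgraph{S_i}$ as an induced subgraph of $\commgraph{S}$ via a fixed lifting map, and then project shortest paths from $\commgraph{S}$ back down to $\commgraph{S_i}$. For each $j\in\setX\setminus\set{i}$ fix some $c_j\in S_j$, and define $\varphi : S_i\setminus\centre{S_i}\to S$ by $\parens{\varphi(z)}_i = z$ and $\parens{\varphi(z)}_j = c_j$ for $j\neq i$. By Proposition~\ref{direct prod: Z(S), S comm <=> S_i comm}, $\centre{S} = \prod_{l=1}^n \centre{S_l}$, so $z\notin\centre{S_i}$ forces $\varphi(z)\notin\centre{S}$; hence each $\varphi(z)$ is a vertex of $\commgraph{S}$. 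Moreover, by Lemma~\ref{direct prod: commutativity}, for distinct $x,y\in S_i\setminus\centre{S_i}$ one has $xy=yx$ if and only if $\varphi(x)\varphi(y) = \varphi(y)\varphi(x)$, since the other coordinates of $\varphi(x)$ and $\varphi(y)$ coincide and therefore commute. Thus $\varphi$ realises $\commgraph{S_i}$ as an induced subgraph of $\commgraph{S}$.

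Now suppose $\commgraph{S}$ is connected and let $x,y\in S_i\setminus\centre{S_i}$ be distinct. Take a shortest path $\varphi(x) = s^{(0)} - s^{(1)} - \cdots - s^{(k)} = \varphi(y)$ in $\commgraph{S}$, so $k\leqslant\diam{\commgraph{S}}$. Projecting to the $i$-th coordinate and applying Lemma~\ref{direct prod: commutativity} yields $s^{(l)}_i s^{(l+1)}_i = s^{(l+1)}_i s^{(l)}_i$ for every $l$. To turn the projected sequence $x = s^{(0)}_i, s^{(1)}_i, \ldots, s^{(k)}_i = y$ into a walk in $\commgraph{S_i}$, I need each $s^{(l)}_i$ to lie in $S_i\setminus\centre{S_i}$; this is exactly where the case hypothesis on $i$ enters the argument.

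If $\NC = \set{i}$, then $S_l$ is commutative for every $l\neq i$, so $\centre{S_l} = S_l$, and since $s^{(l)}\notin\centre{S} = \prod_{j=1}^n \centre{S_j}$ it must be that $s^{(l)}_i\notin\centre{S_i}$. If instead $\centre{S_i} = \emptyset$, then $S_i\setminus\centre{S_i} = S_i$, so the required containment is automatic. In either case, the projected sequence is a walk in $\commgraph{S_i}$ of length $k$ between the distinct vertices $x$ and $y$, and shortcutting repeated vertices in the standard way extracts a genuine path in $\commgraph{S_i}$ from $x$ to $y$ of length at most $k$. Hence $\dist{\commgraph{S_i}}{x}{y}\leqslant k\leqslant\diam{\commgraph{S}}$, proving simultaneously that $\commgraph{S_i}$ is connected and the diameter inequality.

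The only real (and minor) obstacle is the projection step: without the case hypothesis, an intermediate vertex of a path in $\commgraph{S}$ could have $i$-th coordinate in $\centre{S_i}$, in which case the projection would exit $\commgraph{S_i}$. The two conditions $\NC=\set{i}$ and $\centre{S_i}=\emptyset$ are precisely the clean hypotheses that rule this out.
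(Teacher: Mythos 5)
Your proposal is correct and follows essentially the same route as the paper's proof: lift $x,y$ to non-central elements of $S$ agreeing in all other coordinates, take a shortest path in $\commgraph{S}$, project to the $i$-th coordinate, use the hypothesis ($\NC=\set{i}$ or $\centre{S_i}=\emptyset$) to keep the projected vertices out of $\centre{S_i}$, and shortcut repeats to obtain a path in $\commgraph{S_i}$ of length at most $\diam{\commgraph{S}}$. The only cosmetic difference is that you package the lift as an explicit induced-subgraph embedding $\varphi$, which is a pleasant observation but not needed for the argument.
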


\begin{proof}
	Let $i\in\NC$ be such that $\NC=\set{i}$ or $\centre{S_i}=\emptyset$. Suppose that $\commgraph{S}$ is connected. Let $x,y\in S_i\setminus\centre{S_i}$. For each $j\in \setX\setminus\set{i}$, let $x_j\in S_j$. Let $s,t\in S$ be such that for all $j\in \setX$
	\begin{displaymath}
		s_j=\begin{cases}
			x& \text{if } j=i,\\
			x_j& \text{if } j\neq i;
		\end{cases}\qquad
		t_j=\begin{cases}
			y& \text{if } j=i,\\
			x_j& \text{if } j\neq i.
		\end{cases}
	\end{displaymath}
	It follows from the fact that $x,y\in S_i\setminus\centre{S_i}$ and Proposition~\ref{direct prod: Z(S), S comm <=> S_i comm} that $s,t\in S \setminus\centre{S}$. Since $\commgraph{S}$ is connected, there is a path from $s$ to $t$ in $\commgraph{S}$. Let $s=s^{(1)}-s^{(2)}-\cdots-s^{(m)}=t$ be a path of minimum length, so that $m-1=\dist{\commgraph{S}}{s}{t}$.
	
	We begin by verifying that $s^{(1)}_i,\ldots,s^{(m)}_i\in S_i\setminus\centre{S_i}$. We have $\centre{S_i}=\emptyset$ or $\NC=\set{i}$. If $\centre{S_i}=\emptyset$, then $s^{(1)}_i,\ldots,s^{(m)}_i\in S_i\setminus\centre{S_i}$ because $s^{(1)}_i,\ldots,s^{(m)}_i\in S_i$. If $\NC=\set{i}$, then due to the fact that $s^{(1)},\ldots,s^{(m)}\in S\setminus\centre{S}$ and $s^{(1)}_j,\ldots,s^{(m)}_j\in S_j=\centre{S_j}$ for all $j\in C=\setX\setminus\NC=\setX\setminus\set{i}$, and by Proposition~\ref{direct prod: Z(S), S comm <=> S_i comm}, we have that $s^{(1)}_i,\ldots,s^{(m)}_i\in S_i\setminus\centre{S_i}$. 
	
	In addition, we have $s^{(k)}_is^{(k+1)}_i=s^{(k+1)}_is^{(k)}_i$ for all $k\in\X{m-1}$ because $s^{(k)}s^{(k+1)}=s^{(k+1)}s^{(k)}$ for all $k\in\X{m-1}$ and by Lemma~\ref{direct prod: commutativity}. Thus $x=s_i=s^{(1)}_i\sim s^{(2)}_i\sim\cdots\sim s^{(m)}_i=t_i=y$ (in $\commgraph{S_i}$), which implies that there exists a path from $x$ to $y$ in $\commgraph{S_i}$ and $\dist{\commgraph{S_i}}{x}{y}\leqslant m-1=\dist{\commgraph{S}}{s}{t}\leqslant \diam{\commgraph{S}}$. Since $x$ and $y$ are arbitrary elements of $S_i\setminus\centre{S_i}$, then this means that $\commgraph{S_i}$ is connected and
	\begin{displaymath}
		\diam{\commgraph{S_i}}=\max\gset{\dist{\commgraph{S_i}}{x}{y}}{x,y\in S_i\setminus\centre{S_i}}\leqslant\diam{\commgraph{S}}.\qedhere
	\end{displaymath}
\end{proof}

\begin{theorem}\label{direct prod: diameter}
	Suppose that $\NC\neq\emptyset$.
	\begin{enumerate}
		\item Suppose that $\NC=\set{i}$. Then $\commgraph{S}$ is connected if and only if $\commgraph{S_i}$ is connected, in which case we have $\diam{\commgraph{S}}=\diam{\commgraph{S_i}}$.
		
		\item Suppose that $\abs{\NC}\geqslant 2$. Then $\commgraph{S}$ is connected if and only if for all $i\in \NC$ we have $\centre{S_i}\neq\emptyset$ or $\commgraph{S_i}$ is connected. In this case we have:
		
		\begin{enumerate}
			\item If $\centre{S_i}\neq\emptyset$ for all $i\in \NC$, then $\diam{\commgraph{S}}\in\set{2,3}$. Moreover, $\diam{\commgraph{S}}=2$ if and only if there exists $j\in \NC$ such that $\diam{\commgraph{S_j}}=2$.
			
			\item If there exists $j\in \NC$ such that $\centre{S_j}=\emptyset$, then $\diam{\commgraph{S}}=\max\gset{\diam{\commgraph{S_i}}}{i\in \NC \text{ and } \centre{S_i}=\emptyset}$.
		\end{enumerate}
	\end{enumerate}
\end{theorem}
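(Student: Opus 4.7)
My plan is to handle the three parts in order. Lemma~\ref{direct prod: lemma diameter} supplies every lower bound on $\diam{\commgraph{S}}$, and every upper bound will come from explicitly lifting paths or building short walks in $\commgraph{S}$, with Lemma~\ref{direct prod: commutativity} reducing each commutativity check to a componentwise one. Part~1 is quickest: every vertex $s$ of $\commgraph{S}$ has $s_i\in S_i\setminus\centre{S_i}$ by Proposition~\ref{direct prod: Z(S), S comm <=> S_i comm}, since $S_j=\centre{S_j}$ for $j\neq i$. Given $s,t\in S\setminus\centre{S}$, I will lift a shortest path from $s_i$ to $t_i$ in $\commgraph{S_i}$ to a walk of the same length in $\commgraph{S}$ by putting the path values on the $i$-th coordinate and filling the remaining (commutative) coordinates so that the endpoints agree with $s$ and $t$. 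This yields $\diam{\commgraph{S}}\leqslant\diam{\commgraph{S_i}}$; the reverse inequality and the connectedness equivalence come directly from Lemma~\ref{direct prod: lemma diameter}.

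For Part~2, the forward direction of the connectedness equivalence follows from Lemma~\ref{direct prod: lemma diameter} applied to each $i\in\NC$ with $\centre{S_i}=\emptyset$; the backward direction is subsumed by the upper bounds below. For sub-case~(b), set $J=\gset{i\in\NC}{\centre{S_i}=\emptyset}$ and $D=\max\gset{\diam{\commgraph{S_i}}}{i\in J}$. Lemma~\ref{direct prod: lemma diameter} gives $D\leqslant\diam{\commgraph{S}}$. Conversely, given $s,t\in S\setminus\centre{S}$, I will run shortest paths $s_i=x^{(i)}_1-\cdots-x^{(i)}_{m_i}=t_i$ in $\commgraph{S_i}$ in parallel for $i\in J$ (lengths at most $D$), use central elements $z_i\in\centre{S_i}$ to park the remaining $\NC$-coordinates between the endpoints, and transition arbitrarily inside the commutative factors, producing a length-$D$ walk from $s$ to $t$ whose $J$-coordinates keep every intermediate vertex outside $\centre{S}$.

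Sub-case~(a), where $\centre{S_i}\neq\emptyset$ for all $i\in\NC$, is the delicate part. For $\diam{\commgraph{S}}\leqslant 3$, I split according to $I_s=\gset{i\in\NC}{s_i\notin\centre{S_i}}$ and $I_t=\gset{i\in\NC}{t_i\notin\centre{S_i}}$: if distinct $i_0\in I_s$ and $j_0\in I_t$ exist, two intermediate vertices $u,v$ carrying $s_{i_0}$, $t_{j_0}$ on those coordinates and central elements elsewhere in $\NC$ yield $s\sim u\sim v\sim t$; otherwise $I_s=I_t=\set{i_*}$ and, using $\abs{\NC}\geqslant 2$ to pick $j'\in\NC\setminus\set{i_*}$, a single intermediate $u$ with $u_{i_*}=z_{i_*}$, $u_{j'}\in S_{j'}\setminus\centre{S_{j'}}$ and $u_k=z_k$ elsewhere gives $s\sim u\sim t$. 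For the equivalence $\diam{\commgraph{S}}=2\iff\exists j\in\NC:\diam{\commgraph{S_j}}=2$, the $\Leftarrow$ direction adapts the single-intermediate construction with $u_j$ chosen by a short case analysis on whether $s_j,t_j$ are central and whether they commute (the hypothesis $\diam{\commgraph{S_j}}=2$ being used precisely when both are non-central and non-commuting, to supply a common neighbour in $\commgraph{S_j}$). For $\Rightarrow$, assuming $\diam{\commgraph{S_j}}\geqslant 3$ for every $j\in\NC$, I pick $x_j,y_j\in S_j\setminus\centre{S_j}$ at distance $\geqslant 3$ in each $\commgraph{S_j}$ and lift to $s,t\in S\setminus\centre{S}$; any candidate common neighbour $u$ of $s,t$ in $\commgraph{S}$ must supply some $u_{j_0}\in S_{j_0}\setminus\centre{S_{j_0}}$ commuting with both $x_{j_0}$ and $y_{j_0}$, contradicting $\dist{\commgraph{S_{j_0}}}{x_{j_0}}{y_{j_0}}\geqslant 3$. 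The main obstacle throughout~(a) is the bookkeeping required to keep every intermediate vertex both commuting with the prescribed endpoints and outside $\centre{S}$, which is exactly where the hypotheses $\abs{\NC}\geqslant 2$ and $\centre{S_i}\neq\emptyset$ for all $i\in\NC$ earn their keep, by letting me park most coordinates at central elements without losing non-centrality overall.
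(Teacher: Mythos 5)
Your proposal is correct and follows essentially the same strategy as the paper's proof: Lemma~\ref{direct prod: lemma diameter} supplies all lower bounds, the upper bounds come from coordinate-wise lifting of paths with central elements parked on the remaining non-commutative coordinates, and the bound $\diam{\commgraph{S}}\geqslant 3$ in case 2(a) is obtained exactly as in the paper by lifting pairs at distance greater than $2$ in each factor and showing any common neighbour would have to be central. The only differences are cosmetic reorganisations of sub-cases (you split 2(a) on whether $I_s=I_t$ is a singleton where the paper splits on whether $\gset{i\in\NC}{t_i\notin\centre{S_i}}$ is the singleton $\set{j}$, and you replace the paper's two sub-cases in 2(b) by padding all parallel paths to length exactly $D\geqslant 2$), and these all check out.
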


We observe that in 2.a) we are not excluding the possibility of the existence of $i\in \NC$ such that $\diam{\commgraph{S_i}}=\infty$, that is, such that $\commgraph{S_i}$ is not connected.

\begin{proof}
	\textbf{\textit{Part 1.}} Suppose that $\commgraph{S}$ is connected. Then, since $\NC=\set{i}$, Lemma~\ref{direct prod: lemma diameter} guarantees that $\commgraph{S_i}$ is connected and $\diam{\commgraph{S_i}}\leqslant\diam{\commgraph{S}}$.
	
	Now suppose that $\commgraph{S_i}$ is connected. Let $s,t\in S \setminus\centre{S}$. We have that $s_j,t_j\in S_j=\centre{S_j}$ for all $j\in C=\setX\setminus\set{i}$. Then, since $s,t\in S\setminus\centre{S}$, and as a result of Proposition~\ref{direct prod: Z(S), S comm <=> S_i comm}, we must have $s_i,t_i\in S_i\setminus\centre{S_i}$. Consequently, there exists a path from $s_i$ to $t_i$ in $\commgraph{S_i}$. Let $s_i=x_1-x_2-\cdots-x_m=t_i$ be a path from $s_i$ to $t_i$ in $\commgraph{S_i}$ such that $m-1=\dist{\commgraph{S_i}}{s_i}{t_i}$. For each $k\in\X{m}$ let $s^{(k)}\in S$ be such that
	\begin{displaymath}
		s^{(k)}_j=\begin{cases}
			x_k& \text{if } j=i,\\
			s_j& \text{if } k\neq m \text{ and } j\neq i,\\
			t_j& \text{if } k=m \text{ and } j\neq i
		\end{cases}
	\end{displaymath}
	for all $j\in \setX$. As a consequence of $x_1,\ldots,x_m\in S_i\setminus\centre{S_i}$, we have $s^{(1)},\ldots,s^{(m)}\in S\setminus\centre{S}$ (by Proposition~\ref{direct prod: Z(S), S comm <=> S_i comm}). We also have $x_kx_{k+1}=x_{k+1}x_{k}$ for all $k\in\X{m-1}$ and $s_jt_j=t_js_j$ for all $j\in C=\setX\setminus\set{i}$ (because $S_j$ is commutative for all $j\in C$). Thus, by Lemma~\ref{direct prod: commutativity}, we have that $s^{(k)}s^{(k+1)}=s^{(k+1)}s^{(k)}$ for all $k\in\X{m-1}$ and, consequently, $s=s^{(1)}\sim s^{(2)}\sim \cdots\sim s^{(m)}=t$ (in $\commgraph{S}$). This means that there is a path from $s$ to $t$ in $\commgraph{S}$ and $\dist{\commgraph{S}}{s}{t}\leqslant m-1=\dist{\commgraph{S_i}}{s_i}{t_i}\leqslant\diam{\commgraph{S_i}}$. Since $s$ and $t$ are arbitrary elements of $S\setminus\centre{S}$, then we have that $\commgraph{S}$ is connected and, additionally,
	\begin{displaymath}
		\diam{\commgraph{S}}=\max\gset{\dist{\commgraph{S}}{s}{t}}{s,t\in S\setminus\centre{S}}\leqslant \diam{\commgraph{S_i}}.
	\end{displaymath}
	
	\medskip
	
	\textbf{\textit{Part 2.}} First we are going to prove the direct implication of 2. Suppose that $\commgraph{S}$ is connected. Let $i\in \NC$ and assume that $\centre{S_i}=\emptyset$. Then we have that $\commgraph{S_i}$ is connected and $\diam{\commgraph{S_i}}\leqslant\diam{\commgraph{G}}$ (as a consequence of Lemma~\ref{direct prod: lemma diameter}).
	
	Now suppose that for all $i\in \NC$ we have $\centre{S_i}\neq\emptyset$ or $\commgraph{S_i}$ is connected. We want to prove that $\commgraph{S}$ is connected. We consider two cases: in the first one we will assume that $\centre{S_i}\neq\emptyset$ for all $i\in\NC$; and in the second one we will assume that there exists $i\in \NC$ such that $\centre{S_i}=\emptyset$.
	
	\smallskip
	
	\textbf{Case 1:} Suppose that $\centre{S_i}\neq\emptyset$ for all $i\in \NC$. We notice that we also have $\centre{S_i}\neq\emptyset$ for all $i\in C$ because $S_i$ is commutative for all $i\in C$. We are going to prove that, if $\diam{\commgraph{S_j}}=2$ for some $j\in \NC$, then $\diam{\commgraph{G}}=2$; and if $\diam{\commgraph{S_i}}\neq 2$ for all $i\in \NC$, then $\diam{\commgraph{G}}=3$.
	
	\smallskip
	
	\textit{Sub-case 1:} Suppose that there exists $j\in \NC$ such that $\diam{\commgraph{S_j}}=2$. Let $s,t\in S\setminus\centre{S}$. Then there exists $x_j\in S_j\setminus\centre{S_j}$ such that $s_j\sim x_j\sim t_j$ (in $\commgraph{S_j}$). For each $i\in \setX\setminus\set{j}$ let $z_i\in \centre{S_i}$. (We observe that $\centre{S_i}\neq\emptyset$ for all $i\in\setX$.) We define $r\in S$ as being the element such that for all $i\in \setX$
	\begin{displaymath}
		r_i=\begin{cases}
			x_j& \text{if } i=j,\\
			z_i& \text{if } i\neq j.
		\end{cases}
	\end{displaymath}
	As a result of Proposition~\ref{direct prod: Z(S), S comm <=> S_i comm}, and the fact that $x_j\in S_j\setminus\centre{S_j}$, we have that $r\in S\setminus\centre{S}$. Moreover, since $s_j\sim x_j\sim t_j$ (in $\commgraph{S_j}$) and $z_i\in\centre{S_i}$ for all $i\in \setX\setminus\set{j}$, Lemma~\ref{direct prod: commutativity} guarantees that $s\sim r\sim t$ (in $\commgraph{S}$). Thus there is a path from $s$ to $t$ in $\commgraph{S}$ and we have $\dist{\commgraph{S}}{s}{t}\leqslant 2$. Since $s$ and $t$ are arbitrary elements of $S\setminus\centre{S}$, then $\commgraph{S}$ is connected and
	\begin{displaymath}
		\diam{\commgraph{S}}\leqslant\max\gset{\dist{\commgraph{S}}{s}{t}}{s,t\in S\setminus\centre{S}}\leqslant 2.
	\end{displaymath}
	The result follows from the fact that $\diam{\commgraph{S}}\geqslant 2$, which concludes the proof of sub-case 1.
	
	\smallskip
	
	\textit{Sub-case 2:} Now suppose that $\diam{\commgraph{S_i}}\neq 2$ for all $i\in \NC$. Let $s,t\in S\setminus\centre{S}$. Then, by Proposition~\ref{direct prod: Z(S), S comm <=> S_i comm}, there exists $j\in \NC$ such that $s_j\in S_j\setminus\centre{S_j}$. Furthermore, Proposition~\ref{direct prod: Z(S), S comm <=> S_i comm} also guarantees that $J=\gset{i\in \NC}{t_i\in S_i\setminus \centre{S_i}}\neq\emptyset$. We consider two sub-sub-cases: $J=\set{j}$ and $J\neq\set{j}$.
	
	\smallskip
	
	\textsc{Sub-sub-case 1:} Assume that $J=\set{j}$. Since $\abs{\NC}\geqslant 2$, then there exists $k\in \NC$ such that $k\neq j$. We have $t_k\in\centre{S_k}$. Let $x_k\in S_k\setminus\centre{S_k}$ and for each $i\in \NC\setminus\set{k}$ let $z_i\in\centre{S_i}$. Let $r^{(1)},r^{(2)}\in S$ be such that for all $i\in \setX$
	\begin{displaymath}
		r^{(1)}_i=\begin{cases}
			t_k& \text{if } i=k,\\
			s_i& \text{if } i\neq k;
		\end{cases}\qquad
		r^{(2)}_i=\begin{cases}
			x_k& \text{if } i=k,\\
			z_i& \text{if } i\in \NC\setminus\set{k},\\
			s_i& \text{if } i\in C.
		\end{cases}
	\end{displaymath}
	Due to the fact that $s_j\in S_j\setminus\centre{S_j}$ and $x_k\in S_k\setminus\centre{S_k}$ (and as a result of Proposition~\ref{direct prod: Z(S), S comm <=> S_i comm}), we have $r^{(1)},r^{(2)}\in S\setminus\centre{S}$. Moreover, Lemma~\ref{direct prod: commutativity} ensures that we have $sr^{(1)}=r^{(1)}s$ (because $t_k\in\centre{S_k}$), $r^{(1)}r^{(2)}=r^{(2)}r^{(1)}$ (because $t_k\in\centre{S_k}$ and $z_i\in\centre{S_i}$ for all $i\in \NC\setminus\set{k}$) and $r^{(2)}t=tr^{(2)}$ (because $t_k\in\centre{S_k}$, $z_i\in\centre{S_i}$ for all $i\in \NC\setminus\set{k}$ and $S_i$ is commutative for all $i\in C$). Thus $s\sim r^{(1)}\sim r^{(2)}\sim t$, which implies that there is a path from $s$ to $t$ in $\commgraph{S}$ and $\dist{\commgraph{S}}{s}{t}\leqslant 3$.
	
	\smallskip
	
	\textsc{Sub-sub-case 2:} Assume that $J\neq\set{j}$. Let $k\in J\setminus\set{j}$. Then $t_k\in S_k\setminus\centre{S_k}$. For each $i\in \NC$ we choose $z_i\in\centre{S_i}$. (We recall that $\centre{S_i}\neq\emptyset$ for all $i\in\NC$.) Let $r^{(1)},r^{(2)}\in S$ be such that for all $i\in \setX$
	\begin{displaymath}
		r^{(1)}_i=\begin{cases}
			z_k& \text{if } i=k,\\
			s_i& \text{if } i\neq k;
		\end{cases}\qquad
		r^{(2)}_i=\begin{cases}
			t_k& \text{if } i=k,\\
			z_i& \text{if } i\in \NC\setminus\set{k},\\
			s_i& \text{if } i\in C.
		\end{cases}
	\end{displaymath}
	As a result of Proposition~\ref{direct prod: Z(S), S comm <=> S_i comm}, and the fact that $s_j\in S_j\setminus\centre{S_j}$ and $t_k\in S_k\setminus\centre{S_k}$, we have $r^{(1)},r^{(2)}\in S\setminus\centre{S}$. Additionally, by Lemma~\ref{direct prod: commutativity}, we have $sr^{(1)}=r^{(1)}s$ (because $z_k\in \centre{S_k}$), $r^{(1)}r^{(2)}=r^{(2)}r^{(1)}$ (because $z_i\in \centre{S_i}$ for all $i\in \NC$) and $r^{(2)}t=tr^{(2)}$ (because $z_i\in \centre{S_i}$ for all $i\in \NC\setminus\set{k}$ and $S_i$ is commutative for all $i\in C$). Thus $s\sim r^{(1)}\sim r^{(2)}\sim t$ and, consequently, there is a path from $s$ to $t$ in $\commgraph{S}$ and $\dist{\commgraph{S}}{s}{t}\leqslant 3$.
	
	\smallskip
	
	In both sub-sub-cases we concluded that $\dist{\commgraph{S}}{s}{t}\leqslant 3$. Since $s$ and $t$ are arbitrary elements of $S\setminus\centre{S}$, then we have that $\commgraph{S}$ is connected and
	\begin{displaymath}
		\diam{\commgraph{S}}\leqslant\max\gset{\dist{\commgraph{S}}{s}{t}}{s,t\in S\setminus\centre{S}}\leqslant 3.
	\end{displaymath}
	
	Now we are going to see that $\diam{\commgraph{S}}\geqslant 3$. Since $\diam{\commgraph{S_i}}>2$ for all $i \in \NC$, then for each $i\in \NC$ there exist $x_i,y_i\in S_i\setminus\centre{S_i}$ such that $\dist{\commgraph{S_i}}{x_i}{y_i}>2$. For each $i\in C$ we select $z_i\in S_i$. Let $s,t\in S$ be such that
	\begin{displaymath}
		s_i=\begin{cases}
			x_i& \text{if } i\in \NC,\\
			z_i& \text{if } i\in C;
		\end{cases}\qquad
		t_i=\begin{cases}
			y_i& \text{if } i\in \NC,\\
			z_i& \text{if } i\in C
		\end{cases}
	\end{displaymath}
	for all $i\in \setX$. Let $r\in S$ be such that $sr=rs$ and $rt=tr$. It follows from Lemma~\ref{direct prod: commutativity} that $x_ir_i=s_ir_i=r_is_i=r_ix_i$ and $r_iy_i=r_it_i=t_ir_i=y_ir_i$ for all $i\in \NC$. Since $\dist{\commgraph{S_i}}{x_i}{y_i}>2$ for all $i\in \NC$, then we must have $r_i\in\centre{S_i}$ for all $i\in \NC$. In addition, we also have $r_i=z_i\in S_i=\centre{S_i}$ for all $i\in C$. Thus, by Proposition~\ref{direct prod: Z(S), S comm <=> S_i comm}, $r\in\centre{S}$, which implies that $\diam{\commgraph{S}}\geqslant\dist{\commgraph{S}}{s}{t}>2$. This concludes sub-case 2 and thus case 1.
	
	\smallskip
	
	\textbf{Case 2:} Suppose that $I=\gset{i\in \NC}{\centre{S_i}=\emptyset}\neq\emptyset$. Then $\commgraph{S_i}$ is connected for all $i\in I$ (and we have $\diam{\commgraph{S_i}}\geqslant 2$ for all $i\in I$). Let $s,t\in S\setminus\centre{S}$. Our aim is to prove that there exists a path from $s$ to $t$ in $\commgraph{S}$.
	
	\smallskip
	
	\textit{Sub-case 1:} Assume that $s_it_i=t_is_i$ for all $i\in I$. For each $i\in \setX\setminus I=\parens{\NC\setminus I} \cup C$ we choose $z_i\in\centre{S_i}$. (We observe that it follows from the definition of $I$ that $\centre{S_i}\neq\emptyset$ for all $i\in \NC\setminus I$ and it follows from the definition of $C$ that $\centre{S_i}\neq\emptyset$ for all $i\in C$.) We define $r\in S$ as being the element such that for all $i\in \setX$
	\begin{displaymath}
		r_i=\begin{cases}
			s_i& \text{if } i\in I,\\
			z_i& \text{if } i\in \setX\setminus I.
		\end{cases}
	\end{displaymath}
	Since $\centre{S_i}=\emptyset$ for all $i\in I$, then Proposition~\ref{direct prod: Z(S), S comm <=> S_i comm} guarantees that $r\in S\setminus\centre{S}$. Additionally, we have $sr=rs$ and $rt=tr$ because $s_it_i=t_is_i$ for all $i\in I$ and $r_i=z_i\in\centre{S_i}$ for all $i\in \setX \setminus I$, and by Lemma~\ref{direct prod: commutativity}. Thus $s\sim r\sim t$ (in $\commgraph{S}$), which implies that there exists a path from $s$ to $t$ in $\commgraph{S}$ and
	\begin{displaymath}
		\dist{\commgraph{S}}{s}{t}\leqslant 2\leqslant \max\gset{\diam{\commgraph{S_i}}}{i\in I}.
	\end{displaymath}
	
	\smallskip
	
	\textit{Sub-case 2:} Now assume that there exists $j\in I$ such that $s_jt_j\neq t_js_j$. Then we have $\dist{\commgraph{S_j}}{s_j}{t_j}\geqslant 2$. For each $i\in I$ there exists a path from $s_i$ to $t_i$ in $\commgraph{S_i}$. For each $i\in I$ let $s_i=s_{i1}-s_{i2}-\cdots-s_{im_i}=t_i$ be a path from $s_i$ to $t_i$ such that $m_i-1=\dist{\commgraph{S_i}}{s_i}{t_i}$. Let $m=\max\gset{m_i}{i\in I}$. (We observe that we have $m\geqslant m_j=\dist{\commgraph{S_j}}{s_j}{t_j}+1\geqslant 3$.) We choose $z_i\in \centre{S_i}$ for all $i \in \NC\setminus I$. For each $k\in\X{m}$ we define $s^{(k)}\in S$ as the element such that
	\begin{displaymath}
		s^{(k)}_i=\begin{cases}
			s_{ik}& \text{if } k<m_i \text{ and } i\in I,\\
			s_{im_i}& \text{if } m_i\leqslant k\leqslant m \text{ and } i\in I,\\
			s_i& \text{if } k=1 \text{ and } i\in \NC\setminus I,\\
			z_i& \text{if } k=2 \text{ and } i\in \NC\setminus I,\\
			t_i& \text{if } 2< k\leqslant m \text{ and } i\in \NC\setminus I,\\
			s_i& \text{if } k\neq m \text{ and } i\in C,\\
			t_i& \text{if } k=m \text{ and } i\in C
		\end{cases}
	\end{displaymath}
	for all $i\in \setX$. For all $k\in\X{m-1}$ and $i\in C$ we have $s^{(k)}_is^{(k+1)}_i=s^{(k+1)}_is^{(k)}_i$ because $S_i$ is commutative for all $i\in C$. We also have $s^{(1)}_is^{(2)}_i=s_iz_i=z_is_i=s^{(2)}_is^{(1)}_i$ and $s^{(2)}_is^{(3)}_i=z_it_i=t_iz_i=s^{(3)}_is^{(2)}_i$ for all $i\in \NC\setminus I$ --- because $s^{(2)}_i=z_i\in \centre{S_i}$ for all $i\in \NC\setminus I$ --- and we have $s^{(k)}_is^{(k+1)}_i=t_it_i=s^{(k+1)}_is^{(k)}_i$ for all $k\in\set{3,\ldots,m-1}$ and $i\in \NC\setminus I$. Finally, for all $i\in I$ and $k\in\X{m_i-1}$ we have $s^{(k)}_is^{(k+1)}_i=s_{ik}s_{i\parens{k+1}}=s_{i\parens{k+1}}s_{ik}=s^{(k+1)}_is^{(k)}_i$ --- because for all $i\in I$ and $k\in\X{m_i-1}$ we have $s_{ik}\sim s_{i(k+1)}$ (in $\commgraph{S_i}$) --- and for all $i\in I$ and $k\in\set{m_i,\ldots,m-1}$ we have $s^{(k)}_is^{(k+1)}_i=s_{im_i}s_{im_i}=s^{(k+1)}_is^{(k)}_i$. Thus Lemma~\ref{direct prod: commutativity} guarantees that $s^{(k)}s^{(k+1)}=s^{(k+1)}s^{(k)}$ for all $k\in\X{m-1}$ and, consequently, we have $s=s^{(1)}\sim s^{(2)}\sim\cdots\sim s^{(m)}=t$ (in $\commgraph{S}$). Therefore there exists a path from $s$ to $t$ in $\commgraph{S}$ and
	\begin{align*}
		\dist{\commgraph{S}}{s}{t}&\leqslant m-1\\
		&=\max\gset{m_i-1}{i\in I}\\
		&=\max\gset{\dist{\commgraph{S_i}}{s_i}{t_i}}{i\in I}\\
		&\leqslant\max\gset{\diam{\commgraph{S_i}}}{i\in I}.
	\end{align*}
	
	\smallskip
	
	Since $s$ and $t$ are arbitrary elements of $S\setminus\centre{S}$, we just proved that $\commgraph{S}$ is connected and
	\begin{displaymath}
		\diam{\commgraph{S}}\leqslant\max\gset{\dist{\commgraph{S}}{s}{t}}{s,t\in S\setminus\centre{S}}\leqslant \max\gset{\diam{\commgraph{S_i}}}{i\in I}.
	\end{displaymath}
	Moreover, as a consequence of Lemma~\ref{direct prod: lemma diameter} we have that, when $\commgraph{S}$ is connected, then $\diam{\commgraph{S_i}}\leqslant \diam{\commgraph{S}}$ for all $i\in I$. Hence we have $\max\gset{\diam{\commgraph{S_i}}}{i\in I}\leqslant\diam{\commgraph{S}}$ and, consequently, $\diam{\commgraph{S}}=\max\gset{\diam{\commgraph{S_i}}}{i\in I}$.
\end{proof}

Arvind et al.~\cite{Graphs_arise_as_commuting_graphs_groups} showed that, when $G_1$ and $G_2$ are groups, then $\extendedcommgraph{G_1\times G_2}$ is isomorphic to $\extendedcommgraph{G_1}\boxtimes\extendedcommgraph{G_2}$. The following result states that this is also true when, instead of two groups, we consider two semigroups and, more generally, when we think about the direct product of $n$ semigroups.

\begin{proposition}\label{direct prod: characterization G*(S) with strong prod}
	We have that $\extendedcommgraph{S}$ is isomorphic to $\strongprod_{i=1}^n\extendedcommgraph{S_i}$.
\end{proposition}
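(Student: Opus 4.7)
The plan is to exhibit the identity bijection on the underlying set $\prod_{i=1}^n S_i$ as the isomorphism. Indeed, the vertex set of $\extendedcommgraph{S}$ is $S = \prod_{i=1}^n S_i$ by definition, and by construction the vertex set of $\strongprod_{i=1}^n \extendedcommgraph{S_i}$ is also $\prod_{i=1}^n S_i$. So the only non-trivial part is matching up the edges.

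Let $s, t \in S$ be distinct. By the definition of the extended commuting graph, $s$ and $t$ are adjacent in $\extendedcommgraph{S}$ if and only if $st = ts$. By Lemma~\ref{direct prod: commutativity}, this is equivalent to $s_i t_i = t_i s_i$ for all $i \in \setX$, which in turn (using the $\sim$ notation from Section~\ref{sec: graphs}, where $x \sim y$ means $x = y$ or $\{x, y\}$ is an edge) is equivalent to $s_i \sim t_i$ in $\extendedcommgraph{S_i}$ for every $i \in \setX$. On the other hand, by the characterisation of adjacency in an iterated strong product recalled in Section~\ref{sec: graphs}, the tuples $s = (s_1, \ldots, s_n)$ and $t = (t_1, \ldots, t_n)$ are adjacent in $\strongprod_{i=1}^n \extendedcommgraph{S_i}$ if and only if $s \neq t$ and $s_i \sim t_i$ in $\extendedcommgraph{S_i}$ for every $i \in \setX$. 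Since we already assumed $s \neq t$, these two adjacency conditions coincide, so the identity map is the required graph isomorphism.

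There is no real obstacle here: the result is essentially a direct translation of Lemma~\ref{direct prod: commutativity} into the language of strong products, once one observes that the convention $s_i \sim t_i$ (allowing equality) is precisely what is built into the adjacency rule of the strong product. The only minor point worth stating cleanly in the proof is that commutativity of a pair $s_i, t_i$ in $S_i$ always yields $s_i \sim t_i$ in $\extendedcommgraph{S_i}$ regardless of whether $s_i = t_i$ or $s_i \neq t_i$, which is exactly how the $\sim$ notation is set up.
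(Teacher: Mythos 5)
Your proof is correct and follows essentially the same route as the paper: identify the vertex sets as $\prod_{i=1}^n S_i$ and then use Lemma~\ref{direct prod: commutativity} to translate adjacency in $\extendedcommgraph{S}$ into the componentwise condition $s_i\sim t_i$ defining adjacency in the strong product. Nothing is missing.
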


\begin{proof}
	For each $i \in \setX$ the set of vertices of $\extendedcommgraph{S_i}$ is $S_i$. Hence the set of vertices of $\strongprod_{i=1}^n\extendedcommgraph{S_i}$ is $\prod_{i=1}^n S_i$, which is set of vertices of $\extendedcommgraph{S}$. Additionally, we have 
	\begin{align*}
		& s \text{ and } t \text{ are adjacent vertices in } \extendedcommgraph{S}\\
		\iff{} & s\neq t \text{ and } st=ts\\
		\iff{} & s\neq t \text{ and } s_it_i=t_is_i \text{ for all } i\in \setX & \bracks{\text{by Lemma~\ref{direct prod: commutativity}}}\\
		\iff{} & s\neq t \text{ and } s_i \sim t_i \text{ for all } i\in \setX\\
		\iff{} & s \text{ and } t \text{ are adjacent vertices in } \strongprod_{i=1}^n\extendedcommgraph{S_i}.&&\qedhere
	\end{align*}
\end{proof}

In the following two theorems we use Proposition~\ref{direct prod: characterization G*(S) with strong prod} to determine the clique number (Theorem~\ref{direct prod: clique number}) and an upper bound for the chromatic number (Theorem~\ref{direct prod: chromatic number}) of $\commgraph{S}$.

\begin{theorem}\label{direct prod: clique number}
	Suppose that $\NC\neq\emptyset$. We have
	\begin{displaymath}
		\cliquenumber{\commgraph{S}}=\parens[\bigg]{\,\prod_{i\in C}\abs{S_i}}\parens[\bigg]{\,\prod_{i\in \NC}{\parens[\big]{\abs{\centre{S_i}}+\cliquenumber{\commgraph{S_i}}}}} -\prod_{i=1}^n{\abs{\centre{S_i}}}.
	\end{displaymath}
\end{theorem}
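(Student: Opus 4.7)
The plan is to compute $\cliquenumber{\commgraph{S}}$ by passing to the extended commuting graph $\extendedcommgraph{S}$, where the strong product decomposition from Proposition~\ref{direct prod: characterization G*(S) with strong prod} immediately gives us a product formula for its clique number, and then to subtract off the center to recover $\cliquenumber{\commgraph{S}}$.

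First I would compute $\cliquenumber{\extendedcommgraph{S_i}}$ for each $i\in\setX$ using Lemma~\ref{preli: commgraph and extended commgraph}. For $i\in C$ the semigroup $S_i$ is commutative, so $\extendedcommgraph{S_i}\cong K_{\abs{S_i}}$ and $\cliquenumber{\extendedcommgraph{S_i}}=\abs{S_i}$. For $i\in\NC$ we have $\extendedcommgraph{S_i}\cong K_{\abs{\centre{S_i}}}\graphjointwo\commgraph{S_i}$, so Lemma~\ref{preli: graph join clique/chromatic numbers} yields $\cliquenumber{\extendedcommgraph{S_i}}=\abs{\centre{S_i}}+\cliquenumber{\commgraph{S_i}}$.

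Next I would combine Proposition~\ref{direct prod: characterization G*(S) with strong prod} with the clique-number part of Theorem~\ref{preli: strong prod clique/chromatic numbers} (applied iteratively, using associativity of the strong product) to obtain
\begin{displaymath}
    \cliquenumber{\extendedcommgraph{S}}=\prod_{i=1}^n\cliquenumber{\extendedcommgraph{S_i}}=\parens[\bigg]{\,\prod_{i\in C}\abs{S_i}}\parens[\bigg]{\,\prod_{i\in \NC}\parens[\big]{\abs{\centre{S_i}}+\cliquenumber{\commgraph{S_i}}}}.
\end{displaymath}
Finally, since $\NC\neq\emptyset$, Proposition~\ref{direct prod: Z(S), S comm <=> S_i comm} tells us that $S$ is non-commutative and $\abs{\centre{S}}=\prod_{i=1}^n\abs{\centre{S_i}}$. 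Applying Lemma~\ref{preli: commgraph and extended commgraph} once more (this time to $S$) gives $\extendedcommgraph{S}\cong K_{\abs{\centre{S}}}\graphjointwo\commgraph{S}$, and hence by Lemma~\ref{preli: graph join clique/chromatic numbers},
\begin{displaymath}
    \cliquenumber{\commgraph{S}}=\cliquenumber{\extendedcommgraph{S}}-\abs{\centre{S}},
\end{displaymath}
which upon substitution produces exactly the claimed formula.

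There is no real obstacle: every ingredient is already established in the preliminaries and in Propositions~\ref{direct prod: Z(S), S comm <=> S_i comm} and~\ref{direct prod: characterization G*(S) with strong prod}. The only point requiring minor care is that the subtracted term $\prod_{i=1}^n\abs{\centre{S_i}}$ correctly equals $\abs{\centre{S}}$; this uses that for $i\in C$ we have $\centre{S_i}=S_i$, so the factors over $C$ in $\prod_{i=1}^n\abs{\centre{S_i}}$ match the factors $\abs{S_i}$ that appear in the main product.
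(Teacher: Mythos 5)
Your proposal is correct and follows essentially the same route as the paper: reduce to $\extendedcommgraph{S}$ via Lemma~\ref{preli: commgraph and extended commgraph} and Lemma~\ref{preli: graph join clique/chromatic numbers}, compute $\cliquenumber{\extendedcommgraph{S}}=\prod_{i=1}^n\cliquenumber{\extendedcommgraph{S_i}}$ from Proposition~\ref{direct prod: characterization G*(S) with strong prod} and iterated use of Theorem~\ref{preli: strong prod clique/chromatic numbers}, and subtract $\abs{\centre{S}}=\prod_{i=1}^n\abs{\centre{S_i}}$ using Proposition~\ref{direct prod: Z(S), S comm <=> S_i comm}. All steps check out; nothing is missing.
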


\begin{proof}
	
	
	Since $\NC\neq\emptyset$, and by Proposition~\ref{direct prod: Z(S), S comm <=> S_i comm}, we have that $S$ is not commutative. Hence, by Lemma~\ref{preli: commgraph and extended commgraph}, $\extendedcommgraph{S}$ is isomorphic to $K_{\centre{S}}\graphjointwo\commgraph{S}$. Thus
	\begin{align*}
		\cliquenumber{\commgraph{S}} & = \cliquenumber{\commgraph{S}} + \cliquenumber{\extendedcommgraph{S}} - \cliquenumber{\extendedcommgraph{S}}\\
		& =\cliquenumber{\commgraph{S}}+ \cliquenumber{\extendedcommgraph{S}}-\cliquenumber{K_{\centre{S}}\graphjointwo\commgraph{S}}\\
		& =\cliquenumber{\commgraph{S}}+ \cliquenumber{\extendedcommgraph{S}}-\parens[\big]{\cliquenumber{K_{\centre{S}}}+\cliquenumber{\commgraph{S}}} & \bracks{\text{by Lemma~\ref{preli: graph join clique/chromatic numbers}}}\\
		& =\cliquenumber{\extendedcommgraph{S}}-\cliquenumber{K_{\centre{S}}}\\
		& =\cliquenumber{\extendedcommgraph{S}}-\abs{\centre{S}}\\
		& =\cliquenumber{\extendedcommgraph{S}}-\abs[\Big]{\prod_{i=1}^n\centre{S_i}}& \bracks{\text{by Proposition~\ref{direct prod: Z(S), S comm <=> S_i comm}}}\\
		& =\cliquenumber{\extendedcommgraph{S}}-\prod_{i=1}^n\abs{\centre{S_i}}.
	\end{align*}
	
	The only thing left to do is to determine $\cliquenumber{\extendedcommgraph{S}}$. We have
	\begin{align*}
		& \cliquenumber{\extendedcommgraph{S}}\\
		={} & \cliquenumber[\Big]{\,\strongprod_{i=1}^n\extendedcommgraph{S_i}} & \bracks{\text{by Proposition~\ref{direct prod: characterization G*(S) with strong prod}}}\\
		={} & \cliquenumber[\bigg]{\parens[\Big]{\parens[\big]{\extendedcommgraph{S_1}\boxtimes\extendedcommgraph{S_2}}\boxtimes\extendedcommgraph{S_3}}\cdots\boxtimes\extendedcommgraph{S_n}}\\
		={} & \parens[\Big]{\parens[\big]{\cliquenumber{\extendedcommgraph{S_1}}\cdot\cliquenumber{\extendedcommgraph{S_2}}}\cdot\cliquenumber{\extendedcommgraph{S_3}}}\cdots\cdot\cliquenumber{\extendedcommgraph{S_n}} & \smash{\begin{minipage}[c]{30mm}\raggedleft [by iterated use of Lemma~\ref{preli: strong prod clique/chromatic numbers}]\end{minipage}}\\
		={} & \prod_{i=1}^n \cliquenumber{\extendedcommgraph{S_i}}\\
		={} & \parens[\bigg]{\,\prod_{i\in C} \cliquenumber{\extendedcommgraph{S_i}}} \parens[\bigg]{\,\prod_{i\in \NC} \cliquenumber{\extendedcommgraph{S_i}}}\\
		={} & \parens[\bigg]{\,\prod_{i\in C} \cliquenumber{ K_{\abs{S_i}}}}\parens[\bigg]{\,\prod_{i\in \NC} \cliquenumber{K_{\abs{\centre{S_i}}}\graphjointwo\commgraph{S_i} }} & \bracks{\text{by Lemma~\ref{preli: commgraph and extended commgraph}}}\\
		={}& \parens[\bigg]{\,\prod_{i\in C} \cliquenumber{ K_{\abs{S_i}}}}\parens[\bigg]{\,\prod_{i\in \NC} \cliquenumber{K_{\abs{\centre{S_i}}}}+\cliquenumber{\commgraph{S_i}}} & \bracks{\text{by Lemma~\ref{preli: graph join clique/chromatic numbers}}}\\
		={}& \parens[\bigg]{\,\prod_{i\in C} \abs{S_i}}\parens[\bigg]{\,\prod_{i\in \NC} \abs{\centre{S_i}}+\cliquenumber{\commgraph{S_i}}}.
	\end{align*}
	
	Therefore
	\begin{displaymath}
		\cliquenumber{\commgraph{S}}=\parens[\bigg]{\,\prod_{i\in C} \abs{S_i}}\parens[\bigg]{\,\prod_{i\in \NC} \abs{\centre{S_i}}+\cliquenumber{\commgraph{S_i}}}-\prod_{i=1}^n\abs{\centre{S_i}}.\qedhere
	\end{displaymath}
\end{proof}

We observe that Theorem~\ref{direct prod: clique number} provides a lower bound for $\chromaticnumber{\commgraph{S}}$. In the next Theorem we present an upper bound for $\chromaticnumber{\commgraph{S}}$.

\begin{theorem}\label{direct prod: chromatic number}
	Suppose that $\NC\ne\emptyset$. We have
	\begin{displaymath}
		\chromaticnumber{\commgraph{S}}\leqslant\parens[\bigg]{\,\prod_{i\in C}\abs{S_i}}\parens[\bigg]{\,\prod_{i\in \NC}{\parens[\big]{\abs{\centre{S_i}}+\chromaticnumber{\commgraph{S_i}}}}} -\prod_{i=1}^n{\abs{\centre{S_i}}}.
	\end{displaymath}
\end{theorem}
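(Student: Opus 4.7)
The plan is to follow the exact same scaffolding as the proof of Theorem~\ref{direct prod: clique number}, replacing the one step where strong products behave multiplicatively for clique numbers with its chromatic-number counterpart, which only yields an inequality.

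First I would use Lemma~\ref{preli: commgraph and extended commgraph}(2) (applicable since $\NC\neq\emptyset$ forces $S$ to be non-commutative by Proposition~\ref{direct prod: Z(S), S comm <=> S_i comm}) to write $\extendedcommgraph{S}\cong K_{\abs{\centre{S}}}\graphjointwo\commgraph{S}$, and then apply Lemma~\ref{preli: graph join clique/chromatic numbers}(2) to obtain
\begin{displaymath}
\chromaticnumber{\extendedcommgraph{S}}=\abs{\centre{S}}+\chromaticnumber{\commgraph{S}},
\end{displaymath}
so that $\chromaticnumber{\commgraph{S}}=\chromaticnumber{\extendedcommgraph{S}}-\abs{\centre{S}}$. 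By Proposition~\ref{direct prod: Z(S), S comm <=> S_i comm} we have $\abs{\centre{S}}=\prod_{i=1}^n\abs{\centre{S_i}}$, which reduces the problem to producing the correct upper bound for $\chromaticnumber{\extendedcommgraph{S}}$.

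Next I would invoke Proposition~\ref{direct prod: characterization G*(S) with strong prod} to identify $\extendedcommgraph{S}$ with $\strongprod_{i=1}^n\extendedcommgraph{S_i}$, and then iterate Theorem~\ref{preli: strong prod clique/chromatic numbers}(2) (using associativity of the strong product) to get
\begin{displaymath}
\chromaticnumber{\extendedcommgraph{S}}\leqslant \prod_{i=1}^n\chromaticnumber{\extendedcommgraph{S_i}}.
\end{displaymath}
For each factor I would then apply Lemma~\ref{preli: commgraph and extended commgraph}: for $i\in C$ the semigroup $S_i$ is commutative, so $\extendedcommgraph{S_i}\cong K_{\abs{S_i}}$ and $\chromaticnumber{\extendedcommgraph{S_i}}=\abs{S_i}$; for $i\in\NC$ we instead have $\extendedcommgraph{S_i}\cong K_{\abs{\centre{S_i}}}\graphjointwo\commgraph{S_i}$ and Lemma~\ref{preli: graph join clique/chromatic numbers}(2) gives $\chromaticnumber{\extendedcommgraph{S_i}}=\abs{\centre{S_i}}+\chromaticnumber{\commgraph{S_i}}$.

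Combining these three ingredients yields
\begin{displaymath}
\chromaticnumber{\commgraph{S}}=\chromaticnumber{\extendedcommgraph{S}}-\prod_{i=1}^n\abs{\centre{S_i}}\leqslant \parens[\bigg]{\,\prod_{i\in C}\abs{S_i}}\parens[\bigg]{\,\prod_{i\in\NC}\parens[\big]{\abs{\centre{S_i}}+\chromaticnumber{\commgraph{S_i}}}}-\prod_{i=1}^n\abs{\centre{S_i}},
\end{displaymath}
as required. There is no real obstacle here: the argument is essentially the template of Theorem~\ref{direct prod: clique number}, and the only substantive change is that the single inequality in Theorem~\ref{preli: strong prod clique/chromatic numbers}(2) propagates through, preventing the conclusion from being an equality. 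The only minor care needed is to justify that the iterated application of Theorem~\ref{preli: strong prod clique/chromatic numbers}(2) is legitimate, which follows immediately from the associativity (up to isomorphism) of the strong product noted in Section~\ref{sec: graphs}.
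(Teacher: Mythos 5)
Your proposal is correct and follows essentially the same route as the paper's own proof: reduce to $\chromaticnumber{\extendedcommgraph{S}}$ via Lemma~\ref{preli: commgraph and extended commgraph} and Lemma~\ref{preli: graph join clique/chromatic numbers}, then bound $\chromaticnumber{\extendedcommgraph{S}}$ using Proposition~\ref{direct prod: characterization G*(S) with strong prod} and an iterated application of Theorem~\ref{preli: strong prod clique/chromatic numbers}(2), and finally evaluate each factor according to whether $i\in C$ or $i\in\NC$. No gaps.
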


\begin{proof}
	As a consequence of $\NC\neq\emptyset$, and by Proposition~\ref{direct prod: Z(S), S comm <=> S_i comm}, we have that $S$ is not commutative. Then it follows from Lemma~\ref{preli: commgraph and extended commgraph} that $\extendedcommgraph{S}$ is isomorphic to $K_{\abs{\centre{S}}}\graphjointwo\commgraph{S}$ and, consequently, we have
	\begin{align*}
		\chromaticnumber{\extendedcommgraph{S}}&=\chromaticnumber{K_{\abs{\centre{S}}}\graphjointwo\commgraph{S}}\\
		&=\chromaticnumber{K_{\abs{\centre{S}}}}+\chromaticnumber{\commgraph{S}}& \bracks{\text{by Lemma~\ref{preli: graph join clique/chromatic numbers}}}\\
		&=\abs{\centre{S}}+\chromaticnumber{\commgraph{S}}\\
		&=\abs[\Big]{\prod_{i=1}^n\centre{S_i}} + \chromaticnumber{\commgraph{S}}& \bracks{\text{by Proposition~\ref{direct prod: Z(S), S comm <=> S_i comm}}}\\
		&=\prod_{i=1}^n\abs{\centre{S_i}} + \chromaticnumber{\commgraph{S}}.
	\end{align*}
	
	Furthermore, we have
	\begin{align*}
		&\chromaticnumber{\extendedcommgraph{S}}\\
		={} & \chromaticnumber[\Big]{\,\strongprod_{i=1}^n\extendedcommgraph{S_i}} & \bracks{\text{by Proposition~\ref{direct prod: characterization G*(S) with strong prod}}}\\
		={} & \chromaticnumber[\bigg]{\parens[\Big]{\parens[\big]{\extendedcommgraph{S_1}\boxtimes\extendedcommgraph{S_2}}\boxtimes\extendedcommgraph{S_3}}\cdots\boxtimes\extendedcommgraph{S_n}}\\
		\leqslant{} & \parens[\Big]{\parens[\big]{\chromaticnumber{\extendedcommgraph{S_1}}\cdot\chromaticnumber{\extendedcommgraph{S_2}}}\cdot\chromaticnumber{\extendedcommgraph{S_3}}}\cdots\cdot\chromaticnumber{\extendedcommgraph{S_n}} & \smash{\begin{minipage}[c]{30mm}\raggedleft [by iterated use of Lemma~\ref{preli: strong prod clique/chromatic numbers}]\end{minipage}}\\
		={}& \prod_{i=1}^n\chromaticnumber{\extendedcommgraph{S_i}}\\
		={}& \parens[\bigg]{\,\prod_{i\in C}\chromaticnumber{\extendedcommgraph{S_i}}}\parens[\bigg]{\,\prod_{i\in \NC}\chromaticnumber{\extendedcommgraph{S_i}}}\\
		={}&\parens[\bigg]{\,\prod_{i\in C}\chromaticnumber{K_{\abs{S_i}}}}\parens[\bigg]{\,\prod_{i\in \NC}\chromaticnumber[\big]{K_{\abs{\centre{S_i}}}\graphjointwo\commgraph{S_i}}}& \bracks{\text{by Lemma~\ref{preli: commgraph and extended commgraph}}}\\
		={}&\parens[\bigg]{\,\prod_{i\in C}\chromaticnumber{K_{\abs{S_i}}}}\parens[\bigg]{\,\prod_{i\in \NC}\parens[\big]{\chromaticnumber{K_{\abs{\centre{S_i}}}}+\chromaticnumber{\commgraph{S_i}}}}& \bracks{\text{by Lemma~\ref{preli: graph join clique/chromatic numbers}}}\\
		={}&\parens[\bigg]{\,\prod_{i\in C}\abs{S_i}}\parens[\bigg]{\,\prod_{i\in \NC}\parens[\big]{\abs{\centre{S_i}}+\chromaticnumber{\commgraph{S_i}}}}.
	\end{align*}
	
	Therefore
	\begin{align*}
		\chromaticnumber{\commgraph{S}}&=\chromaticnumber{\extendedcommgraph{S}}-\prod_{i=1}^n\abs{\centre{S_i}}\\
		&\leqslant \parens[\bigg]{\,\prod_{i\in C}\abs{S_i}}\parens[\bigg]{\,\prod_{i\in \NC}\parens{\abs{\centre{S_i}}+\chromaticnumber{\commgraph{S_i}}}}-\prod_{i=1}^n\abs{\centre{S_i}}.\qedhere
	\end{align*}
\end{proof}

Theorem~\ref{direct prod: girth} characterizes the situations in which $\commgraph{S}$ contains cycles and it provides a way to determine the length of a shortest cycle in $\commgraph{G}$. Before we prove Theorem~\ref{direct prod: girth}, we establish (and prove) the following lemma, which will simplify some cases of the proof of Theorem~\ref{direct prod: girth}.

\begin{lemma}\label{direct prod: lemma girth}
	Suppose that $\NC\neq\emptyset$. Suppose that one of the following three conditions holds:
	\begin{enumerate}
		\item There exist $i\in \setX$ such that $\NC\setminus\set{i}\neq\emptyset$, and distinct $x,y,z\in S_i$ such that $x$, $y$ and $z$ commute with each other.
		
		\item There exist distinct $i,j\in \setX$ such that $\NC\setminus \set{i,j}\neq\emptyset$, distinct $x,y\in S_i$ such that $xy=yx$, and distinct $z,w\in S_j$ such that $zw=wz$.
		
		\item There exist $i\in \setX$ and $j\in\NC$, distinct $x,y\in S_i$ such that $xy=yx$, and $z\in S_j$ and $w\in S_j\setminus\centre{S_j}$ such that $zw=wz$. If $x\in S_i\setminus\centre{S_i}$ or $z\in S_j\setminus\centre{S_j}$, then $\commgraph{S}$ contains a cycle of length 3.
	\end{enumerate}
	Then $\commgraph{S}$ contains a cycle of length 3.
\end{lemma}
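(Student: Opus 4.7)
The plan is to handle each of the three conditions by explicitly exhibiting three distinct elements $s^{(1)}, s^{(2)}, s^{(3)} \in S\setminus\centre{S}$ that pairwise commute; these vertices then form a triangle in $\commgraph{S}$. In every case, componentwise commutativity is verified via Lemma~\ref{direct prod: commutativity}, and non-centrality is witnessed by producing a single coordinate outside $\centre{S_k}$, as permitted by Proposition~\ref{direct prod: Z(S), S comm <=> S_i comm}.

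For condition~1, I would fix some $j \in \NC\setminus\{i\}$, pick $v \in S_j\setminus\centre{S_j}$, and choose arbitrary $a_k \in S_k$ for $k \in \setX\setminus\{i,j\}$. Define $s^{(1)}, s^{(2)}, s^{(3)}$ to have $i$-th components $x, y, z$, common $j$-th component $v$, and common $k$-th component $a_k$ elsewhere. Pairwise distinctness comes from the $i$-coordinate, pairwise commutativity from the given pairwise commutation of $x, y, z$ together with equal entries in every other coordinate, and non-centrality from the $j$-th entry.

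For condition~2, I would fix $k \in \NC\setminus\{i,j\}$, pick $v \in S_k\setminus\centre{S_k}$, and fix $a_\ell \in S_\ell$ for $\ell\notin\{i,j,k\}$. Take $s^{(1)}, s^{(2)}, s^{(3)}$ whose $(i,j)$-entries are $(x, z), (y, w), (x, w)$ respectively, with $k$-th component $v$ and $\ell$-th component $a_\ell$ elsewhere. Distinctness follows from inspecting the $i$- and $j$-entries (using $x \ne y$ and $z \ne w$); commutativity in each coordinate is immediate from $xy = yx$ and $zw = wz$; non-centrality comes from the $k$-th entry.

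For condition~3, taking $i \ne j$ and $z \ne w$ as the intended regime, I would fix $a_k \in S_k$ for $k \notin \{i, j\}$ and let $s^{(1)}, s^{(2)}, s^{(3)}$ have $(i,j)$-entries $(x, w), (y, w), (x, z)$ respectively, agreeing in all other coordinates. The first two elements are non-central because their $j$-th component is $w \in S_j\setminus\centre{S_j}$, and the third is non-central precisely because the extra hypothesis ``$x \in S_i\setminus\centre{S_i}$ or $z \in S_j\setminus\centre{S_j}$'' is assumed. The hardest part of the argument is exactly this case: condition~3 is weaker than condition~2 in that there is no spare non-commutative factor outside $\{i,j\}$ from which to draw a non-central coordinate, so the disjunctive hypothesis on $x$ and $z$ must do precisely the work of certifying the third vertex's non-centrality.
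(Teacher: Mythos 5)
Your proposal is correct and follows essentially the same strategy as the paper: in each case you build three pairwise-commuting, pairwise-distinct non-central elements by varying the designated coordinates and padding the rest with a fixed non-central entry (or, in condition~3, using the disjunctive hypothesis to certify the one vertex that lacks such an entry), exactly as the paper does — indeed your triangle for condition~3 is the same set of vertices. The only differences are the harmless reordering of which $(i,j)$-pairs label the three vertices in condition~2, and your explicit (and accurate) observation that condition~3 implicitly requires $i\neq j$ and $z\neq w$.
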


\begin{proof}
	\textbf{Part 1.} Suppose that there exist $i\in \setX$ with $\NC\setminus\set{i}\neq\emptyset$, and distinct $x,y,z\in S_i$ such that $x$, $y$ and $z$ commute with each other. Then there exists $j\in\NC\setminus\set{i}$. We choose an element $w\in S_j\setminus\centre{S_j}$, and for each $k\in \setX\setminus\set{i,j}$ we choose an element $z_k\in S_k$. Let $s,t,r\in S$ be such that for each $k\in \setX$
	\begin{align*}
		s_k&=\begin{cases}
			x& \text{if }k=i,\\
			w& \text{if }k=j,\\
			z_k& \text{if }k\in \setX \setminus\set{i,j};
		\end{cases}&
		t_k=\begin{cases}
			y& \text{if }k=i,\\
			w& \text{if }k=j,\\
			z_k& \text{if }k\in \setX \setminus\set{i,j};
		\end{cases}\\
		r_k&=\begin{cases}
			z& \text{if }k=i,\\
			w& \text{if }k=j,\\
			z_k& \text{if }k\in \setX \setminus\set{i,j}.
		\end{cases}
	\end{align*}
	Since $s_j=t_j=r_j=w\in S_j\setminus\centre{S_j}$, then Proposition~\ref{direct prod: Z(S), S comm <=> S_i comm} guarantees that $s,t,r\in S\setminus \centre{S}$. Furthermore, it follows from Lemma~\ref{direct prod: commutativity}, and the fact that $x$, $y$ and $z$ commute with each other, that $s$, $t$ and $r$ commute with each other and, consequently, $s-t-r-s$ is a cycle (of length 3) in $\commgraph{S}$.
	
	\medskip
	
	\textbf{Part 2.} Suppose that there exist distinct $i,j\in \setX$ with $\NC\setminus \set{i,j}\neq\emptyset$, distinct $x,y\in S_i$ such that $xy=yx$, and distinct $z,w\in S_j$ such that $zw=wz$. Let $l\in \NC\setminus\set{i,j}$. We select an element $u\in S_l\setminus\centre{S_l}$, and for each $k\in \setX\setminus\set{i,j,l}$ we select an element $z_k\in S_k$. We then use these elements to define $s,t,r\in S$ in the following way:
	\begin{align*}
		s_k&=\begin{cases}
			x& \text{if }k=i,\\
			z& \text{if }k=j,\\
			u& \text{if }k=l,\\
			z_k& \text{if }k\in \setX\setminus\set{i,j,l};
		\end{cases}&
		t_k=\begin{cases}
			y& \text{if }k=i,\\
			z& \text{if }k=j,\\
			u& \text{if }k=l,\\
			z_k& \text{if }k\in \setX \setminus\set{i,j,l};
		\end{cases}\\
		r_k&=\begin{cases}
			y& \text{if }k=i,\\
			w& \text{if }k=j,\\
			u& \text{if }k=l,\\
			z_k& \text{if }k\in \setX \setminus\set{i,j,l}
		\end{cases}
	\end{align*}
	for all $k\in \setX$. Due to the fact that $s_l=t_l=r_l=u\in S_l\setminus\centre{S_l}$, then we have $s,t,r\in S\setminus \centre{S}$ (by Proposition~\ref{direct prod: Z(S), S comm <=> S_i comm}). Furthermore, as a consequence of Lemma~\ref{direct prod: commutativity}, and the fact that $xy=yx$ and $zw=wz$, we have that $s$, $t$ and $r$ commute with each other. Therefore $s-t-r-s$ is a cycle (of length 3) in $\commgraph{S}$.
	
	\medskip
	
	\textbf{Part 3.} Suppose that there exist $i\in \setX$ and $j\in\NC$, distinct $x,y\in S_i$ such that $xy=yx$, and $z\in S_j$ and $w\in S_j\setminus\centre{S_j}$ such that $zw=wz$. Assume that $x\in S_i\setminus\centre{S_i}$ or $z\in S_j\setminus\centre{S_j}$. For each $k\in \setX\setminus\set{i,j}$ we choose $x_k\in S_k$. Let $s,t,r\in S$ be such that for all $k\in \setX$
	\begin{align*}
		s_k&=\begin{cases}
			x& \text{if }k=i,\\
			z& \text{if }k=j,\\
			x_k& \text{if }k\in \setX \setminus\set{i,j};
		\end{cases}&
		t_k=\begin{cases}
			x& \text{if }k=i,\\
			w& \text{if }k=j,\\
			x_k& \text{if }k\in \setX \setminus\set{i,j};
		\end{cases}\\
		r_k&=\begin{cases}
			y& \text{if }k=i,\\
			w& \text{if }k=j,\\
			x_k& \text{if }k\in \setX \setminus\set{i,j}.
		\end{cases}
	\end{align*}
	Since $x\in S_i\setminus\centre{S_i}$ or $z\in S_j\setminus\centre{S_j}$, then it follows from Proposition~\ref{direct prod: Z(S), S comm <=> S_i comm} that $s\in S\setminus\centre{S}$. Moreover, we have $t_j=r_j=w\in S_j\setminus\centre{S_j}$, which implies (by Proposition~\ref{direct prod: Z(S), S comm <=> S_i comm}) that $t,r\in S\setminus\centre{S_j}$. As a consequence of the fact that $xy=yx$ and $zw=wz$, and due to Lemma~\ref{direct prod: commutativity}, we have that $s$, $t$ and $r$ commute with each other. Therefore $s-t-r-s$ is a cycle in $\commgraph{S}$ (of length $3$).
\end{proof}

\begin{theorem}\label{direct prod: girth}
	Suppose that $\NC\neq\emptyset$. We have that $\commgraph{S}$ contains cycles if and only if at least one of the following conditions holds:
	\begin{enumerate}
		\item There exists $i\in C$ such that $\abs{S_i}\geqslant 3$.
		
		\item There exist distinct $i,j\in C$ such that $\abs{S_i}\geqslant 2$ and $\abs{S_j}\geqslant 2$.
		
		\item There exist $i\in C$ and $j\in \NC$ such that $\abs{S_i}\geqslant 2$ and $\commgraph{S_j}$ is not a null graph.
		
		\item $\abs{\NC}\geqslant 2$ and there exist $i\in C$ and $j\in \NC$ such that $\abs{S_i}\geqslant 2$ and $\centre{S_j}\neq\emptyset$.
		
		\item There exist distinct $i,j\in \NC$ such that either $\centre{S_i}\neq\emptyset$ or $\commgraph{S_i}$ is not a null graph and either $\centre{S_j}\neq\emptyset$ or $\commgraph{S_j}$ is not a null graph.
		
		\item $\abs{\NC}\geqslant 2$ and there exist $i\in \NC$ such that $\abs{\centre{S_i}}\geqslant 2$.
		
		\item $\abs{\NC}\geqslant 2$ and there exist $i\in \NC$ such that $\centre{S_i}\neq\emptyset$ and $\commgraph{S_i}$ is not a null graph.
		
		\item There exist $i\in \NC$ such that $\commgraph{S_i}$ contains cycles.
	\end{enumerate}
	Furthermore, if at least one of the conditions 1--7 is satisfied, then $\girth{\commgraph{S}}=3$, and if condition 8 is the only one that is satisfied, then there exists a unique $i\in \NC$ such that $\commgraph{S_i}$ contains cycles and we have $\girth{\commgraph{S}}=\girth{\commgraph{S_i}}$.
\end{theorem}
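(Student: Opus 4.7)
The proof splits into two parts: sufficiency of each of conditions 1--8 for producing cycles of the claimed length, and necessity (if $\commgraph{S}$ has a cycle, some condition must hold), together with the girth computation in the ``only 8 holds'' case.

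For sufficiency, conditions 1--7 each yield a 3-cycle via Lemma~\ref{direct prod: lemma girth} or a direct construction, and condition 8 yields a cycle of length $\girth{\commgraph{S_i}}$. Condition 1 fits Part 1 of the lemma (three pairwise commuting elements of $S_i$ with $i\in C$, plus some $j \in \NC$ supplying a non-central component). Conditions 2 and 4 fit Part 2: in condition 4 I would take $i \in C$ with distinct $x, y \in S_i$ and $j \in \NC$ with $z \in \centre{S_j}$ and $w \in S_j \setminus \centre{S_j}$ as the commuting pairs, using $\abs{\NC} \geqslant 2$ to secure $\NC \setminus \set{i, j} \neq \emptyset$. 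Conditions 3, 5 and 7 fit Part 3, choosing the commuting pair in $S_j$ to include an element witnessing the non-centrality condition. Condition 6 requires a direct construction: with distinct $z_1, z_2 \in \centre{S_i}$ and $y \in S_i \setminus \centre{S_i}$ (which exists because $S_i$ is non-commutative), plus $u \in S_k \setminus \centre{S_k}$ for some $k \in \NC \setminus \set{i}$, I define three elements of $S$ agreeing on all coordinates except $i$ (where they take the values $z_1, z_2, y$); they are pairwise distinct, non-central thanks to $u$ at coordinate $k$, and pairwise commuting since the three $i$-values pairwise commute, so by Lemma~\ref{direct prod: commutativity} they form a 3-cycle. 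For condition 8, I lift a shortest cycle $x_1 - \cdots - x_m - x_1$ from $\commgraph{S_i}$ by fixing all other coordinates at a single choice; the lifted vertices lie in $S \setminus \centre{S}$ since their $i$-th coordinate is in $S_i \setminus \centre{S_i}$, and consecutive lifts commute by Lemma~\ref{direct prod: commutativity}.

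For the necessity direction I would prove the contrapositive: if none of 1--8 hold, then $\commgraph{S}$ contains no cycles. The simultaneous negation of these conditions yields strong structural restrictions: for $i \in C$ one has $\abs{S_i} \leqslant 2$ (from the negation of 1), at most one $i \in C$ has $\abs{S_i} = 2$ (from the negation of 2), the various combinations of $\centre{S_i} \neq \emptyset$ or $\commgraph{S_i}$ non-null across distinct factors of $\NC$ are severely restricted (from the negations of 3--7), and no $\commgraph{S_i}$ contains a cycle (from the negation of 8). A cycle in $\commgraph{S}$ requires three or more pairwise commuting non-central elements; I would case-analyse on which coordinates they differ and show that any such configuration triggers one of the (assumed absent) conditions 1--7 or a cycle in some $\commgraph{S_i}$, a contradiction. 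For the girth claim when only 8 holds, uniqueness of the $i \in \NC$ with $\commgraph{S_i}$ cycle-containing follows from the negation of 5 (two such distinct $i, j$ would give two non-null graphs, triggering 5). The upper bound $\girth{\commgraph{S}} \leqslant \girth{\commgraph{S_i}}$ is from the lift already constructed, and for the lower bound the same structural analysis used in the necessity argument shows that the $i$-th coordinates of any cycle $s^{(1)} - \cdots - s^{(m)} - s^{(1)}$ in $\commgraph{S}$ form a closed walk in $\commgraph{S_i}$ containing a cycle of length at most $m$.

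The main obstacle is the necessity direction, and within it, the girth lower bound in the ``only 8 holds'' case. Conditions 1--7 are tuned so that once they all fail, essentially the only mechanism producing a cycle in $\commgraph{S}$ is to lift one from a single $\commgraph{S_i}$; verifying this requires a careful enumeration of how three (or more) pairwise commuting vertices of $\commgraph{S}$ can differ across coordinates and showing each scenario either produces a cycle in some $\commgraph{S_i}$ or activates one of 1--7.
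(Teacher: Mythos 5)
Your sufficiency argument follows the paper's essentially verbatim: conditions 1--7 are fed into Lemma~\ref{direct prod: lemma girth} (your ``direct construction'' for condition 6 is exactly the construction that proves Part 1 of that lemma) and condition 8 is handled by lifting a shortest cycle of $\commgraph{S_i}$ with all other coordinates frozen. One small routing error: condition 7 cannot go through Part 3 of the lemma, because that part's construction requires two \emph{distinct} indices $i$ and $j$ (its elements are defined by the cases $k=i$, $k=j$, $k\in\setX\setminus\set{i,j}$), whereas condition 7 supplies a single $i\in\NC$; the three pairwise commuting elements it yields (two adjacent non-central vertices plus a central element, all in $S_i$) should instead be fed into Part 1 of the lemma, using $\abs{\NC}\geqslant 2$ to get $\NC\setminus\set{i}\neq\emptyset$.

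The genuine gap is in the necessity direction, which you yourself identify as the main obstacle but do not resolve. Your plan rests on the claim that ``a cycle in $\commgraph{S}$ requires three or more pairwise commuting non-central elements,'' which is false for cycles of length at least $4$: in a cycle $s^{(1)}-s^{(2)}-s^{(3)}-s^{(4)}-s^{(1)}$ only \emph{consecutive} vertices need commute, so a case analysis predicated on a commuting triple cannot rule out longer cycles assembled from data spread across several factors. Likewise, the projection of a cycle onto the $i$-th coordinate is a priori only a sequence in which consecutive entries are \emph{adjacent or equal}, and such a closed walk need not contain any cycle (it can collapse to a single edge traversed back and forth), so your girth lower bound does not follow as stated. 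The missing idea, which the paper supplies, is to take a cycle of \emph{minimum} length $m$ and show first, from the failure of conditions 1, 2 and 5, that all coordinates are constant along the cycle except possibly one index $i\in\NC$ and one index $j\in C$ with $\abs{S_j}=2$; minimality of $m$ then forces the pairs $\parens[\big]{s^{(l)}_i,s^{(l)}_j}$ to be pairwise distinct, and the failure of conditions 3, 4, 6 and 7 is used to deduce $\abs{S_j}=1$ and that every $s^{(l)}_i$ lies in $S_i\setminus\centre{S_i}$, so that the projection really is a cycle of length $m$ in $\commgraph{S_i}$. Without this step both the ``only if'' direction and the equality $\girth{\commgraph{S}}=\girth{\commgraph{S_i}}$ remain unproved.
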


\begin{proof}
	
	We are going to divide this proof into three parts.
	
	\medskip
	
	\textbf{Part 1.} We are going to see that each one of the conditions 1--8 implies the existence of a cycle in $\commgraph{S}$.
	
	\smallskip
	
	\textit{Case 1:} Assume that condition 1 holds. Since $\abs{S_i}\geqslant 3$, then there exist distinct $x,y,z\in S_i$. In addition, $S_i$ is commutative, which implies that $x$, $y$ and $z$ commute with each other. Moreover, we have $\NC\setminus\set{i}=\NC\neq\emptyset$ because $i\in C=\setX\setminus\NC$. Therefore, condition 1 of Lemma~\ref{direct prod: lemma girth} holds and, consequently, $\commgraph{S}$ contains a cycle (of length $3$).
	
	\smallskip
	
	\textit{Case 2:} Assume that condition 2 holds. There exist distinct $x,y\in S_i$ and distinct $z,w\in S_j$. Due to the fact that $S_i$ and $S_j$ are commutative, we have $xy=yx$ and $zw=wz$. Since we also have $\NC\setminus\set{i,j}=\NC\neq\emptyset$ (because $i,j\in C=\setX\setminus\NC$), then condition 2 of Lemma~\ref{direct prod: lemma girth} is satisfied and, consequently, we can conclude that $\commgraph{S}$ contains a cycle (of length $3$).
	
	\smallskip
	
	\textit{Case 3:} Assume that condition 3 holds. It follows from the fact that $\abs{S_i}\geqslant 2$ that there exist distinct $x,y\in S_i$, and it follows from the fact that $\commgraph{S_j}$ is not a null graph that there exist $z,w\in S_j\setminus\centre{S_j}$ such that $z$ and $w$ are adjacent vertices of $\commgraph{S_j}$. Hence $xy=yx$ and $zw=wz$. Hence condition 3 of Lemma~\ref{direct prod: lemma girth} is satisfied, which implies the existence of a cycle (of length $3$) in $\commgraph{S}$.
	
	\smallskip
	
	\textit{Case 4:} Assume that condition 4 holds. Since $\abs{S_i}\geqslant 2$ and $S_i$ is commutative, then there exist distinct $x,y\in S_i$ that verify $xy=yx$. We also have $\centre{S_j}\neq\emptyset$ and $S_j\setminus\centre{S_j}\neq\emptyset$ (because $S_j$ is not commutative), which implies that there exist $z\in\centre{S_j}$ and $w\in S_j\setminus\centre{S_j}$. Hence $zw=wz$. Furthermore, $\abs{\NC\setminus\set{i,j}}=\abs{\NC\setminus\set{j}}=\abs{\NC}-1\geqslant 1>0$ (because $i\in C=\setX\setminus\NC$, $j\in\NC$ and $\abs{\NC}\geqslant 2$). Thus condition 2 of Lemma~\ref{direct prod: lemma girth} holds and we have $\commgraph{S}$ contains a cycle (of length $3$).
	
	\smallskip
	
	\textit{Case 5:} Assume that condition 5 holds. First we are going to prove that there exist $x\in S_i$ and $y\in S_i\setminus\centre{S_i}$ such that $xy=yx$. Suppose that $\centre{S_i}\neq \emptyset$. Let $x\in\centre{S_i}$ and $y\in S_i\setminus\centre{S_i}$. We have $xy=yx$. Now suppose that $\commgraph{S_i}$ is not a null graph. Then there exist $x,y\in S_i\setminus\centre{S_i}$ such that $x$ and $y$ are adjacent in $\commgraph{S_i}$, which again implies that $xy=yx$. We can show in a similar way that there exist $z\in S_j$ and $w\in S_j\setminus\centre{S_j}$ such that $zw=wz$. Then condition 3 of Lemma~\ref{direct prod: lemma girth} is satisfied and, consequently, $\commgraph{S}$ contains a cycle (of length $3$).
	
	\smallskip
	
	\textit{Case 6:} Assume that condition 6 holds. It follows from the fact that $i\in\NC$ that there exists $x\in S_i\setminus\centre{S_i}$, and it follows from the fact that $\abs{\centre{S_i}}\geqslant 2$ that there exist distinct $y,z\in\centre{S_i}$. Then $x$, $y$ and $z$ commute with each other. Additionally, $\abs{\NC\setminus\set{i}}=\abs{\NC}-1\geqslant 1>0$ (because $i\in \NC$ and $\abs{\NC}\geqslant 2$). Thus, condition 1 of Lemma~\ref{direct prod: lemma girth} holds, which implies that $\commgraph{S}$ contains a cycle (of length $3$).
	
	\smallskip
	
	\textit{Case 7:} Assume that condition 7 holds. Since $\centre{S_i}\neq\emptyset$, then there exists $z\in\centre{S_i}$. Additionally, $\commgraph{S_i}$ is not a null graph, which implies that there exist distinct vertices $x,y\in S_i\setminus\centre{S_i}$ of $\commgraph{S_i}$ such that $x$ and $y$ are adjacent. Then $xy=yx$ and, as a consequence of the fact that $z\in\centre{S_i}$, we also have $xz=zx$ and $yz=zy$. Finally, we have $\abs{\NC\setminus\set{i}}=\abs{\NC}-1\geqslant 1>0$ (because $i\in \NC$ and $\abs{\NC}\geqslant 2$). Therefore, condition 1 of Lemma~\ref{direct prod: lemma girth} holds and, consequently, $\commgraph{S}$ contains a cycle (of length $3$).
	
	\smallskip
	
	\textit{Case 8:} Assume that condition 8 holds. Let $y_1-y_2-\cdots-y_m-y_1$ be a cycle in $\commgraph{S_i}$ and assume that $m=\girth{\commgraph{S_i}}$. Let $x_j\in S_j$ for all $j\in \setX\setminus\set{i}$. For each $k\in\X{m}$ let $s^{(k)}\in S$ be such that
	\begin{displaymath}
		s^{(k)}_j=\begin{cases}
			y_k& \text{if }j=i,\\
			x_j& \text{if }j\neq i
		\end{cases}
	\end{displaymath}
	for all $j\in \setX$. It follows from Lemma~\ref{direct prod: commutativity}, and the fact that $y_ky_{k+1}=y_{k+1}y_k$ for all $k\in\X{m-1}$, that $s^{(k)}s^{(k+1)}=s^{(k+1)}s^{(k)}$ for all $k\in\X{m-1}$. Additionally, since $y_1,\ldots,y_m\in S_i\setminus\centre{S_i}$, then we also have $s^{(1)},\ldots,s^{(m)}\in S\setminus\centre{S}$. Therefore $s^{(1)}-s^{(2)}-\cdots-s^{(m)}-s^{(1)}$ is a cycle in $\commgraph{S}$ and, consequently, $\girth{\commgraph{S}}\leqslant m =\girth{\commgraph{S_i}}$.
	
	\medskip
	
	\textbf{Part 2.} Assume that $\commgraph{S}$ contains cycles and that conditions 1--7 do not hold. Our aim is to prove that there exists $i\in \NC$ such that $\commgraph{S_i}$ contains cycles, that is, we want to see that condition 8 must hold. Let $s^{(1)}-s^{(2)}-\cdots-s^{(m)}-s^{(1)}$ be a cycle in $\commgraph{S}$ and assume that $m=\girth{\commgraph{S}}$.
	
	Let
	\begin{align*}
		&A_1=\gset{k\in \NC}{\centre{S_k}\neq\emptyset \text{ or } \commgraph{S_k} \text{ is  not a null graph}},\\
		&A_2=\gset{k\in C}{\abs{S_k}=2},\\
		&A_3=\gset{k\in C}{\abs{S_k}\geqslant 3}.
	\end{align*}
	
	Since conditions 1, 2 and 5 do not hold, then $\abs{A_3}=0$, $\abs{A_2}\leqslant 1$ and $\abs{A_1}\leqslant 1$. Consequently, there exist $j\in C$ and $i\in \NC$ such that $A_2\subseteq\set{j}$ and $A_1\subseteq \set{i}$.
	
	We have, by Lemma~\ref{direct prod: commutativity}, that $s^{(l)}_ks^{(l+1)}_k=s^{(l+1)}_ks^{(l)}_k$ for all $l\in\X{m-1}$ and $k\in \setX$. Since we also have $\centre{S_k}=\emptyset$ and $\commgraph{S_k}$ is a null graph (that is, $\commgraph{S_k}$ only contains isolated vertices) for all $k\in \NC\setminus\set{i}$, then we have $s^{(1)}_k=s^{(2)}_k=\cdots=s^{(m)}_k$ for all $k\in \NC\setminus\set{i}$. Moreover, we have $s^{(1)}_k=s^{(2)}_k=\cdots=s^{(m)}_k$ for all $k\in C\setminus\set{j}$ (because $\abs{S_k}=1$ for all $k\in C\setminus\set{j}$). Combined with the minimality of $m$, this implies that $\parens[\big]{s^{(1)}_i,s^{(1)}_j},\parens[\big]{s^{(2)}_i,s^{(2)}_j},\ldots,\parens[\big]{s^{(m)}_i,s^{(m)}_j}$ are pairwise distinct. Due to the fact that $\abs{S_j}\leqslant 2$ and $m\geqslant 3$, we have $\abs[\big]{\gset[\big]{s^{(l)}_i}{l\in\X{m}}}\geqslant 2$. In addition, since $s^{(l)}_is^{(l+1)}_i=s^{(l+1)}_is^{(l)}_i$ for all $l\in\X{m-1}$ (by Lemma~\ref{direct prod: commutativity}), we have $\centre{S_i}\neq \emptyset$ or $\commgraph{S_i}$ contains non-isolated vertices (that is, $\commgraph{S_i}$ is not a null graph), which implies that $A_1=\set{i}$. We consider the following two cases:
	
	\smallskip
	
	\textit{Case 1:} Assume that $\abs{\NC}=1$. Then $\NC=\set{i}$. It follows from the fact that $s^{(1)},\ldots,s^{(m)}\in S\setminus\centre{S}$, and Proposition~\ref{direct prod: Z(S), S comm <=> S_i comm}, that $s^{(1)}_i,\ldots,s^{(m)}_i\in S_i\setminus\centre{S_i}$, which implies that $\commgraph{S_i}$ contains non-isolated vertices (that is, $\commgraph{S_i}$ is not a null graph). Since condition 3 does not hold, then $A_2=\emptyset$ and we have $\abs{S_j}=1$. Hence $s^{(1)}_j=s^{(2)}_j=\cdots=s^{(m)}_j$ and, consequently, $s^{(1)}_i,\ldots,s^{(m)}_i$ are pairwise distinct. Furthermore, Lemma~\ref{direct prod: commutativity} implies that $s^{(1)}_is^{(m)}_i=s^{(m)}_is^{(1)}_i$ and $s^{(l)}_is^{(l+1)}_i=s^{(l+1)}_is^{(l)}_i$ for all $l\in\X{m-1}$. Thus $s^{(1)}_i-s^{(2)}_i-\cdots-s^{(m)}_i-s^{(1)}_i$ is a cycle in $\commgraph{S_i}$ and, consequently, $\girth{\commgraph{S_i}}\leqslant m=\girth{\commgraph{S}}$.
	
	\smallskip
	
	\textit{Case 2:} Assume that $\abs{\NC}\geqslant 2$. As a consequence of conditions 3 and 4 not holding, and the fact that $i\in A_1$, we have that $\abs{S_j}=1$. Thus $s^{(1)}_j=s^{(2)}_j=\cdots=s^{(m)}_j$ and, consequently, $s^{(1)}_i,\ldots,s^{(m)}_i$ are pairwise distinct. Additionally, we have $s^{(1)}_is^{(m)}_i=s^{(m)}_is^{(1)}_i$ and $s^{(l)}_is^{(l+1)}_i=s^{(l+1)}_is^{(l)}_i$ for all $l\in\X{m-1}$ (by Lemma~\ref{direct prod: commutativity}). Since $\abs{\centre{S_i}}\leqslant 1$ (because condition 6 does not hold) and $m\geqslant 3$, then we must have $s^{(1)}_i,s^{(m)}_i\in S_i\setminus\centre{S_i}$ or $s^{(t)}_i,s^{(t+1)}_i\in S_i\setminus\centre{S_i}$ for some $t\in\X{m-1}$. This implies that $\commgraph{S_i}$ contains non-isolated vertices (that is, $\commgraph{S_i}$ is not a null graph) and, since condition 7 does not hold, we have $\centre{S_i}=\emptyset$. Therefore $s^{(1)}_i,\ldots,s^{(m)}_i\in S_i\setminus\centre{S_i}$ and, consequently, $s^{(1)}_i-s^{(2)}_i-\cdots-s^{(m)}_i-s^{(1)}_i$ is a cycle in $\commgraph{S_i}$. In addition, we have $\girth{\commgraph{S_i}}\leqslant m=\girth{\commgraph{S}}$.

	\medskip
	
	\textbf{Part 3.} Now we determine $\girth{\commgraph{S}}$ when $\commgraph{S}$ contains cycles. It follows from cases 1--7 of part 1 of the proof that, when at least one of the conditions 1--7 is satisfied, then $\girth{\commgraph{S}}=3$.
	
	Assume that conditions 1--7 do not hold and that condition 8 holds. Then, by part 2 of the proof, there exists $i\in \NC$ such that $\commgraph{S_i}$ contains cycles and $\commgraph{S_k}$ is a null graph for all $k\in \NC\setminus\set{i}$ (which implies that $\commgraph{S_k}$ does not contain cycles for all $k\in \NC\setminus\set{i}$). Furthermore, we saw that $\girth{\commgraph{S_i}}\leqslant\girth{\commgraph{S}}$. In addition, it follows from the proof of case 8 of part 1, that $\girth{\commgraph{S}}\leqslant\girth{\commgraph{S_i}}$, which concludes the proof.
\end{proof}

The last result of this section concerns left paths. We are going to see that the existence of left paths in $\commgraph{S}$ does not depend uniquely on the existence of left paths in $\commgraph{S_i}$ for all $i\in\NC$ --- it also depends on the existence of $*$-left paths in $\extendedcommgraph{S_i}$ for all $i\in C$. Additionally, when $\commgraph{S}$ contains left paths, we supply a way to determine the knit degree of $S$.

\begin{theorem}
	Suppose that $\NC\neq\emptyset$. We have that $\commgraph{S}$ contains left paths if and only if at least one of the following conditions is satisfied:
	\begin{enumerate}
		\item There exists $i\in \NC$ such that $\commgraph{S_i}$ contains left paths.
		
		\item There exists $i\in \setX$ such that $\NC\setminus\set{i}\neq\emptyset$ and $\extendedcommgraph{S_i}$ contains $*$-left paths.
	\end{enumerate}
	Moreover, when $\commgraph{S}$ has left paths we have $\knitdegree{S}=\min \parens{K\cup K^*}$, where
	\begin{align*}
		K&=\gset{\knitdegree{S_i}}{i\in \NC \text{ and } \commgraph{S_i} \text{ contains left paths}},\\
		K^*&=\gset{\starknitdegree{S_i}}{i\in \setX \text{ and } \NC\setminus\set{i}\neq\emptyset \text{ and } \extendedcommgraph{S_i} \text{ contains $*$-left paths}}.
	\end{align*}
		
		
\end{theorem}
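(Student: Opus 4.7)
My plan is to prove both the equivalence and the formula $\knitdegree{S}=\min\parens{K\cup K^*}$ in a single pass, via a \emph{lift} direction (each condition produces a left path in $\commgraph{S}$) and a \emph{project} direction (a shortest left path in $\commgraph{S}$ produces witnesses for the conditions).

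For the lift direction, given either a shortest left path $y_1-\cdots-y_m$ in $\commgraph{S_i}$ (condition 1) or a shortest $*$-left path in $\extendedcommgraph{S_i}$ with $\NC\setminus\set{i}\neq\emptyset$ (condition 2), I would define $s^{(l)}\in S$ by setting the $i$-th coordinate to $y_l$ and fixing arbitrary constants in every other coordinate; in the case of condition 2 I additionally fix some coordinate $j\in\NC\setminus\set{i}$ to an element of $S_j\setminus\centre{S_j}$, which forces non-centrality of each $s^{(l)}$ in $S$ even when some $y_l\in\centre{S_i}$. Lemma~\ref{direct prod: commutativity} gives consecutive commutation and the left-path identity coordinate-wise (both being trivial in the constant coordinates), and Proposition~\ref{direct prod: Z(S), S comm <=> S_i comm} places each $s^{(l)}$ outside $\centre{S}$. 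This yields left paths in $\commgraph{S}$ of lengths $\knitdegree{S_i}$ and $\starknitdegree{S_i}$ respectively, so the ``if'' direction of the equivalence holds and $\knitdegree{S}\leqslant\min\parens{K\cup K^*}$.

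For the project direction, let $s^{(1)}-\cdots-s^{(n)}$ be a shortest left path in $\commgraph{S}$, so $n-1=\knitdegree{S}$. Pick any $k^*\in\setX$ with $s^{(1)}_{k^*}\neq s^{(n)}_{k^*}$. Lemma~\ref{direct prod: commutativity} shows that $s^{(1)}_{k^*},\ldots,s^{(n)}_{k^*}$ is a walk in $\extendedcommgraph{S_{k^*}}$ from $s^{(1)}_{k^*}$ to $s^{(n)}_{k^*}$ satisfying the identities $s^{(1)}_{k^*}s^{(l)}_{k^*}=s^{(n)}_{k^*}s^{(l)}_{k^*}$ for every $l\in\Xn$. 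From this walk I extract a path by iteratively removing segments between repeated vertices; the left-path identity survives since it holds pointwise and only a sub-collection of the $l$'s remain. If $k^*\in\NC$ and every vertex of the extracted path is non-central in $S_{k^*}$, the result is a left path in $\commgraph{S_{k^*}}$ of length at most $n-1$, so condition 1 holds and $\knitdegree{S_{k^*}}\in K$ with $\knitdegree{S_{k^*}}\leqslant\knitdegree{S}$. Otherwise it is a $*$-left path in $\extendedcommgraph{S_{k^*}}$, and to place $\starknitdegree{S_{k^*}}$ in $K^*$ I must check that $\NC\setminus\set{k^*}\neq\emptyset$.

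The subtle step, which I expect to be the main obstacle, is verifying $\NC\setminus\set{k^*}\neq\emptyset$ in the preceding sub-case. When $k^*\in C$ this is immediate since $\NC\subseteq\setX\setminus\set{k^*}$; when $k^*\in\NC$, the sub-case assumption supplies some $l^*$ with $s^{(l^*)}_{k^*}\in\centre{S_{k^*}}$, and the key observation is that $s^{(l^*)}\notin\centre{S}$ combined with Proposition~\ref{direct prod: Z(S), S comm <=> S_i comm} forces a coordinate $j\in\NC$ with $s^{(l^*)}_j\notin\centre{S_j}$, necessarily distinct from $k^*$. This secures $\starknitdegree{S_{k^*}}\leqslant n-1=\knitdegree{S}$; in both sub-cases $\min\parens{K\cup K^*}\leqslant\knitdegree{S}$, completing the equivalence and the formula.
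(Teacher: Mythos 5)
Your proposal is correct and follows essentially the same lift/project strategy as the paper's proof: componentwise construction of the lifted left path (with a non-central entry in some coordinate of $\NC\setminus\set{i}$ for the $*$-left-path case) for the ``if'' direction, and coordinate projection followed by suppression of repeated vertices for the ``only if'' direction. The only notable divergences are that where you extract a full $*$-left path of length at most $\knitdegree{S}$ from the projected walk, the paper short-circuits to a $*$-left path of length at most $2$ through the central vertex (both suffice), and that your explicit check that $\NC\setminus\set{k^*}\neq\emptyset$ --- using that $s^{(l^*)}\notin\centre{S}$ forces a non-central coordinate $j\neq k^*$ --- makes precise a step the paper leaves implicit, so it is worth keeping.
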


\begin{proof}
	\textbf{Part 1.} We begin by proving the forward implication. Suppose that $\commgraph{S}$ contains left paths. Let $s^{(1)}-s^{(2)}-\cdots-s^{(m)}$ be a left path in $\commgraph{S}$ and assume that $\knitdegree{S}=m-1$. Since $s^{(1)}\neq s^{(m)}$, then there exists $i\in \setX$ such that $s^{(1)}_i\neq s^{(m)}_i$. Furthermore, we have $s^{(1)}_is^{(k)}_i= \parens[\big]{s^{(1)}s^{(k)}}_i= \parens[\big]{s^{(m)}s^{(k)}}_i= s^{(m)}_is^{(k)}_i$ for all $k\in\X{m}$. This implies that, any path from $s^{(1)}_i$ to $s^{(m)}_i$ in $\commgraph{S_i}$ (respectively, $\extendedcommgraph{S_i}$) whose vertices belong to $\gset{s^{(k)}_i}{k\in\X{m}}$, is a left path of $\commgraph{S_i}$ (respectively, $*$-left path of $\extendedcommgraph{S_i}$). We will prove that such a path exists in $\commgraph{S_i}$ or in $\extendedcommgraph{S_i}$. We consider the following two cases.
	
	\smallskip
	
	\textit{Case 1:} Suppose that $s^{(1)}_i,\ldots,s^{(m)}_i\in S_i\setminus\centre{S_i}$. Then $i\in\NC$. It follows from the fact that $s^{(k)}s^{(k+1)}=s^{(k+1)}s^{(k)}$ for all $k\in\X{m-1}$, and Lemma~\ref{direct prod: commutativity}, that $s^{(k)}_is^{(k+1)}_i=s^{(k+1)}_is^{(k)}_i$ for all $k\in\X{m-1}$. Then $s^{(1)} \sim s^{(2)} \sim \cdots \sim s^{(m)}$ (in $\commgraph{S_i}$). If there exist $l,t\in\X{m}$ such that $l<t$ and $s^{(l)}_i=s^{(t)}_i$, then we have $s^{(l)}_i \sim s^{(t+1)}_i$ (because $s^{(t)}_i \sim s^{(t+1)}_i$), which means we can suppress $s^{(l+1)}_i,s^{(l+2)}_i,\ldots,s^{(t)}_i$ from the sequence of vertices and obtain the new one $s^{(1)}_i \sim s^{(2)}_i \sim \cdots\sim s^{(l)}_i \sim s^{(t+1)}_i\sim \cdots \sim s^{(m)}_i$. (We observe that we might have $t=m$. In that case the new sequence is $s^{(1)}_i \sim s^{(2)}_i \sim \cdots \sim s^{(l)}_i=s^{(m)}_i$.) We can repeat this process until we obtain a sequence of pairwise distinct vertices. This sequence forms a path from $s^{(1)}_i$ to $s^{(m)}_i$ in $\commgraph{S_i}$ whose vertices belong to $\gset{s^{(k)}_i}{k\in\X{m}}$. Thus $\commgraph{S}$ contains a left path (whose length is at most $m-1$) and we have $\knitdegree{S_i}\leqslant m-1=\knitdegree{S}$.
	
	\smallskip
	
	\textit{Case 2:} Suppose that there exists $l\in\X{m}$ such that $s^{(l)}_i\in\centre{S_i}$. Assume that $s^{(l)}_i=s^{(1)}_i$ or $s^{(l)}_i=s^{(m)}_i$. Then $s^{(1)}_i\in\centre{S_i}$ or $s^{(m)}_i\centre{S_i}$ and, consequently, $s^{(1)}_is^{(m)}_i=s^{(m)}_is^{(1)}_i$. Thus $s^{(1)}_i-s^{(m)}_i$ is a $*$-left path (of length $1$) in $\commgraph{S}$ and, consequently, $\starknitdegree{S_i}=1\leqslant m-1=\knitdegree{S}$. Now assume that $s^{(l)}_i\neq s^{(1)}_i$ and $s^{(l)}_i\neq s^{(m)}_i$. We have $s^{(1)}_is^{(l)}_i=s^{(l)}_is^{(1)}_i$ and $s^{(l)}_is^{(m)}_i=s^{(m)}_is^{(l)}_i$ (because $s^{(l)}_i\in \centre{S_i}$). Thus $s^{(1)}_i-s^{(l)}_i-s^{(m)}_i$ is a $*$-left path in $\extendedcommgraph{S_i}$ (of length $2$). In addition, since $s^{(1)}_i$, $s^{(l)}_i$ and $s^{(m)}_i$ are pairwise distinct, then $m\geqslant 3$ and, consequently, $\starknitdegree{S_i}\leqslant 2\leqslant m-1=\knitdegree{S}$.
	
	\medskip
	
	\textbf{Part 2.} Now we are going to prove the reverse implication. We consider the following two cases:
	
	\smallskip
	
	\textit{Case 1:} Suppose that there exists $i\in \NC$ such that $\commgraph{S_i}$ contains left paths. Let $x_1-x_2-\cdots-x_m$ be a left path in $\commgraph{S_i}$ and assume that $\knitdegree{S_i}=m-1$. For each $j\in\NC\setminus\set{i}$ we select $y_j\in S_j\setminus\centre{S_j}$ and for each $j\in C$ we select $z_j\in S_j$. Let $s^{(1)},\ldots,s^{(m)}\in S$ be such that
	\begin{displaymath}
		s^{(k)}_j=\begin{cases}
			x_k& \text{if } j=i,\\
			y_j& \text{if } j\in\NC\setminus\set{i},\\
			z_j& \text{if } j\in C
		\end{cases}
	\end{displaymath}
	for all $k\in\X{m}$ and $j\in \setX$. We note that, since $x_1,\ldots,x_m\in S_i\setminus\centre{S_i}$, then we also have $s^{(1)},\ldots,s^{(m)}\in S\setminus\centre{S}$ (by Proposition~\ref{direct prod: Z(S), S comm <=> S_i comm}). It follows from Lemma~\ref{direct prod: commutativity}, and the fact that $x_{k}x_{k+1}=x_{k+1}x_{k}$ for all $k\in\X{m-1}$, that $s^{(k)}s^{(k+1)}=s^{(k+1)}s^{(k)}$ for all $k\in\X{m-1}$. Hence $s^{(1)}-s^{(2)}-\cdots-s^{(m)}$ is a path in $\commgraph{S}$. Moreover, $s^{(1)}\neq s^{(m)}$ (because $x_1\neq x_m$), and for all $k\in\X{m}$ and $j\in \setX$ we have
	\begin{align*}
		\parens[\big]{s^{(1)}s^{(k)}}_j&=s^{(1)}_js^{(k)}_j\\
		&=\begin{cases}
			x_1x_k& \text{if } j=i,\\
			y_jy_j& \text{if } j\neq i
		\end{cases}\\
		&=\begin{cases}
			x_mx_k& \text{if } j=i,\\
			y_jy_j& \text{if } j\neq i
		\end{cases}\\
		&=s^{(m)}_js^{(k)}_j\\
		&=\parens[\big]{s^{(m)}s^{(k)}}_j,
	\end{align*}
	which implies that $s^{(1)}s^{(k)}=s^{(m)}s^{(k)}$ for all $k\in\X{m}$. Thus $s^{(1)}-s^{(2)}-\cdots-s^{(m)}$ is a left path in $\commgraph{S}$ and we have $\knitdegree{S}\leqslant m-1=\knitdegree{S}$.
	
	\smallskip
	
	\textit{Case 2:} Suppose that there exists $i\in\setX$ such that $\NC\setminus\set{i}\neq\emptyset$ and $\extendedcommgraph{S_i}$ contains $*$-left paths. Let $t\in\NC\setminus\set{i}$ and let $x_1-x_2-\cdots-x_m$ be a $*$-left path in $\commgraph{S_i}$. The proof of this case is similar to that of case 1. The main difference is the following: unlike the previous case, we might have $\gset{x_k}{k\in\X{m}}\cap\centre{S_i}\neq\emptyset$. Thus, what justifies the conclusion that $s^{(1)},\ldots,s^{(m)}\in S\setminus \centre{S}$ is the fact that $s^{(1)}_t=s^{(2)}_t=\cdots=s^{(m)}_t=y_t\in S_t\setminus \centre{S_t}$, together with Proposition~\ref{direct prod: Z(S), S comm <=> S_i comm}. (We observe that in the previous case we could have $\NC\setminus\set{i}=\emptyset$.) We can also obtain in a similar way that $\knitdegree{S}\leqslant\starknitdegree{S_i}$.
	
	\medskip
	
	\textbf{Part 3.} Now we determine $\knitdegree{S}$ when $\commgraph{S}$ contains left paths. We note that, as a consequence of part 1 of the proof, $K\cup K^*\neq\emptyset$. Furthermore, it also follows from part 1 of the proof that there exists $i\in\NC$ such that $\commgraph{S_i}$ contains left paths and $\knitdegree{S}\geqslant\knitdegree{S_i}\geqslant\min\parens{K\cup K^*}$; or there exists $i\in \setX$ such that $\extendedcommgraph{S_i}$ contains $*$-left paths, $\NC\setminus\set{i}\neq\emptyset$ and $\knitdegree{S}\geqslant\starknitdegree{S_i}\geqslant\min\parens{K\cup K^*}$. Additionally, part 2 of the proof implies that for all $i\in \NC$ such that $\commgraph{S_i}$ contains left paths we have $\knitdegree{S}\leqslant{\knitdegree{S_i}}$; and that for all $i\in \setX$ such that $\NC\setminus\set{i}\neq\emptyset$ and $\extendedcommgraph{S_i}$ contains $*$-left paths we have $\knitdegree{S}\leqslant\starknitdegree{S_i}$. Thus $\knitdegree{S}\leqslant\min\parens{K\cup K^*}$, which concludes the proof.
\end{proof}

    \bibliography{Bibliography} 
\bibliographystyle{alphaurl}

\end{document}